\newtheorem{theorem}{Theorem}[section]
\newtheorem{lemma}{Lemma}[section]
\newtheorem{assumption}{Assumption}[section]
\numberwithin{equation}{section}
\numberwithin{equation}{section} \makeatletter
\begin{document}

\title{\textbf{Hopf Bifurcation for an SIS model with age structure}}
\author{Xiangming Zhang{\small \textsc{$^{a,}$\thanks{%
Research was partially supported by NSFC (Grant No. 11471044) and the
Fundamental Research Funds for the Central Universities.}}} and Zhihua Liu%
{\small \textsc{$^{a,*,}$}}\textsc{\thanks{%
{\small Corresponding author. \newline \indent~~E-mail addresses: xiangmingzhang@mail.bnu.edu.cn (X. Zhang), zhihualiu@bnu.edu.cn (Z. Liu).}}} \\
$^{a}$School of Mathematical Sciences, Beijing Normal University,\\
Beijing, 100875, People's Republic of China}

\date{}
\maketitle
\begin{abstract}
An SIS model is investigated in which the infective individuals are assumed to have an infection-age structure. The model is formulated as an abstract non-densely defined Cauchy problem. We study some dynamical properties of the model by using the theory of integrated semigroup, the Hopf bifurcation theory and the normal form theory for semilinear equations with non-dense domain. Qualitative analysis indicates that there exist some parameter values such that this SIS model has a non-trivial periodic solution which bifurcates from the positive equilibrium. Furthermore, the explicit formulae are given to determine the direction of the Hopf bifurcation and the stability of the bifurcating periodic solutions. Numerical simulations are also carried out to support our theoretical results.

\textbf{Key words:} SIS model; Age structure; Non-densely defined Cauchy problem; Hopf bifurcation; Center manifold; Normal form

\textbf{Mathematics Subject Classification:} 34C20; 34K15; 37L10
\end{abstract}

\section{Introduction}

\noindent

In population dynamics, the age variable plays a vital role in determining the birth, growth and death rates of the populations and their interactions with each other. Loosely speaking, when age structure is introduced into individual interactions, population dynamical models become considerably complex. In general, the progress of disease propagation and individual interactions are modeled by using an ODE system \cite{LiuLevinIwasa-JMB-1986,LiJunZhaoYulinZhuHuaiping-JMAA-2015}. Recently, as the significance of age structure in populations has become increasingly prevalent, there has been explosively growing literature dealing with all kinds of aspects of interacting populations with age structure \cite{MimmoIannelli-1995,WangLangZou-NARWA-2017,XuZhang-DCDS-B-2016,YangLiZhang-IJB-2016,YangRuanXiao-MBE-2015,TangLiu-AMM-2016,LiuLi-JNS-2015,
WangLiu-JMAA-2012,PierreMagalShiguiRuan-MAMS-2009}.
Wang et al. \cite{WangLangZou-NARWA-2017} explored an age-structured hybrid model with two infection modes. They analysed the relative compactness and persistence of the solution semiflow and treated the global attractor by constructing the proper Lyapunov functions.
Yang et al. \cite{YangLiZhang-IJB-2016} investigated a heroin model with age structure and nonlinear incidence rate and studied the stability of equilibria by employing the Lyapunov functions.
An age-structured virus dynamics model with Beddington-DeAngelis infection function was studied by Yang et al. \cite{YangRuanXiao-MBE-2015}. The authors given the basic reproductive number $\mathcal{R}_{0}$ and treated the global behavior of the model in terms of $\mathcal{R}_{0}$.
In \cite{LiuLi-JNS-2015,TangLiu-AMM-2016,WangLiu-JMAA-2012,PierreMagalShiguiRuan-MAMS-2009,LiuMagalRuan-DCDS-B-2016,
LiuMagalRuan-ZAMP-2011,LiuTangMagal-DCDS-B-2015,FuLiuMagal-2015}, by formulating the age-structured model as a non-densely defined
Cauchy problem, the existence of Hopf bifurcation was considered for some
age-structured models.

As is known to all, normal form theory is rather beneficial in simplifying the forms of equations restricted on the center manifolds in researhing the nonlinear dynamical problems, such as the existence of bifurcations and periodic solutions. Normal form theory has been well-extended to various types of
equations, including ordinary differential equations (Guckenheimer and Holmes \cite{GuckenheimerJHolmesP-1983}, Chow et al.\cite{ChowLiWang-1994}), partial differential equations (Kokubu \cite{KokubuHiroshi-JJAM-1984}, Eckmann et al. \cite{EckmannEpsteinWayne-AIHPPT-1993}), functional differential equations (Faria and Magalh$\tilde{a}$es \cite{FariaTeresaMagalhãesLuisT-JDE-1995Hopf,FariaTeresaMagalhãesLuisT-JDE-1995BT}), and so on. More recently, a normal form theory has been well-developed by Liu et al. \cite{LiuZhihuaMagalPierreRuanShigui-JDE-2014} for the non-densely defined abstract Cauchy problems.

In document \cite{LiJunZhaoYulinZhuHuaiping-JMAA-2015}, the authors consider the following system
\begin{equation}\label{system0}
   \left\{
     \begin{array}{rl}
      \frac{dS(t)}{dt}&=N-\mu S(t)-\frac{\alpha S(t)I(t)^{2}}{1+\beta I(t)^{2}}  +\delta_{0} I(t), \\
       \frac{dI(t)}{dt}&=\frac{\alpha S(t)I(t)^{2}}{1+\beta I(t)^{2}} - (\mu+d+ \delta_{0})I(t) ,\\
     \end{array}
   \right.
\end{equation}
where $S(t)$ and $I(t)$ denote the numbers of susceptible and infectious individuals at time $t$, respectively. $N>0$ is the constant recruitment rate for susceptible individuals. $\mu$ $(0<\mu<1)$ is the natural death rate of the population. $\frac{\alpha I(t)^{2}}{1+\beta I(t)^{2}}$ is the effective contact rate that is proposed by Liu et al. \cite{LiuLevinIwasa-JMB-1986}. The authors denote the disease-induced death rate by $d$ $(0<d<1)$. The infective individuals recover with the rate $\delta_{0}$ $(0<\delta_{0}<1)$.  For simplicity of presentation, by rescaling the
parameters and phase variables, the system (\ref{system0}) can be rewritten
as
\begin{equation}\label{system01}
  \left\{
     \begin{array}{ccl}
       \frac{dS(t)}{dt} & = & \Lambda-\mu S(t)-\frac{S(t)I(t)^2}{1+\beta I(t)^2}+\eta I(t), \\
       \frac{dI(t)}{dt} & = & \frac{S(t)I(t)^2}{1+\beta I(t)^2}- I(t). \\
     \end{array}
   \right.
\end{equation}

Motivated by references \cite{ChuLiuMagalRuan-JDDE-2016,LiuZhihuaMagalPierreRuanShigui-JDE-2014},
we reconsider the SIS model (\ref{system01}) as an infection-age structure with infective individuals.
Now we concentrate our attention on the SIS model (\ref{system01}) in which $I(t)$
is structured by the infection-age. Let $a$ be the infection-age variable. $i(t,a)$ is the distribution function of $I$ over infection-age $a$ at time $t$. Then the infected individuals $I(t)$ at time $t$ equals to $\int_{0}^{+\infty}{i(t,a)da}$. Here, $\frac{1}{1+\int_{0}^{+\infty}{\beta(a)i(t,a)da}}$ describes the inhibition effect from the behavioral change of the susceptible individuals when the number of infectious individuals increase, and $\beta(a)$ is a influence function related to infection-age $a$ and satisfies the following assumption \ref{assumption1}. Inspired by the model (\ref{system01}), we consider the nonlinear incidence
\begin{equation*}
\frac{S(t)\int_{0}^{+\infty}{i(t,a)da}}{1+\int_{0}^{+\infty}{\beta(a)i(t,a)da}}.
\end{equation*}
\begin{assumption}\label{assumption1}
Assume that
\begin{equation*}
  \beta(a):=\left\{
               \begin{array}{cl}
                 \beta^{*}, & \mbox{   if   } a\geq \tau, \\
                 0, & \mbox{   if   } a\in (0,\tau), \\
               \end{array}
             \right.
\end{equation*}
where $\tau>0$ and $\beta^{*}>0$. Additionally, it will be convenient for the computation to assume that $\int_{0}^{+\infty}{\beta(a)e^{-a}da}=1$, where $e^{-a}$ denotes the probability of being affected.
\end{assumption}
\noindent
Correspondingly, we derive a new SIS infection-age structured epidemic model
\begin{equation}\label{system}
\left\{
\begin{array}{l}
\frac{dS(t)}{dt}=\Lambda-\mu S(t)-\frac{S(t)\int_{0}^{+\infty}i(t,a)da}{1+\int_{0}^{+\infty}\beta(a)i(t,a)da} +\eta\int_{0}^{+\infty}{i(t,a)da},\\
\frac{\partial i(t,a)}{\partial t}+\frac{\partial i(t,a)}{\partial a} = -i(t,a),\\
  i(t,0)=\frac{S(t)\int_{0}^{+\infty}i(t,a)da}{1+\int_{0}^{+\infty}\beta(a)i(t,a)da}, t>0,\\
  S(0)=S_{0}\geq0, i(0,\cdot)=i_{0}\in L_{+} ^{1}((0, + \infty ),\mathbb{R}),
\end{array}
\right.
\end{equation}
where $\Lambda$ is the constant recruitment rate for susceptible individuals, and $\mu(0<\mu<1)$ is the rate of natural death. The infective individuals recover with the rate $\eta(0<\eta<1)$.
To the best of our knowledge,
the age structure model can be considered as abstract Cauchy problems with non-dense domain by hardly applying the theory of integrated semigroup, combined with Hopf bifurcation theory and normal form theory.
In this paper, authors attempt to investigate the model (\ref{system}) by means of the methods mentioned above.
Furthermore, the explicit formulae are given which determine the direction of the Hopf bifurcation and the stability of the bifurcating periodic solutions. Numerical simulations are also presented to support our theoretical results.

The paper is organized as follows. In Section 2, we reformulate system (\ref{system}) as an abstract non-densely defined Cauchy problem and study the equilibrium, linearized equation and characteristic equation. The existence of Hopf bifurcation is proved in Section 3. In Section 4, we apply the normal form theory to system (\ref{system}), and then the explicit formulae are given to determine the direction of the Hopf bifurcation and the stability of the bifurcating periodic solutions. Some numerical simulations and conclusions are presented In Section 5.

\section{Preliminaries}
\noindent
\subsection{Rescaling time and age}

\noindent

In order to use the parameter $\tau$ as a bifurcation parameter (i.e. in order to obtain a smooth dependency of the system (\ref{system}) with respect to $\tau$), we first normalize $\tau$ in (\ref{system}) by the time-scaling and age-scaling
\begin{equation*}
  \hat{a}=\frac{a}{\tau}  \mbox{  and   }  \hat{t}=\frac{t}{\tau},
\end{equation*}
and consider the following distribution
\begin{equation*}
  \hat{S}(\hat{t})=S(\tau\hat{t})  \mbox{   and   }   \hat{i}(\hat{t},\hat{a})=\tau i(\tau\hat{t},\tau\hat{a}).
\end{equation*}
By dropping the hat notation we obtain, after the change of variables, the new system
\begin{equation}\label{newsystem}
\left\{
\begin{array}{l}
\frac{dS(t)}{dt}=\tau\left[\Lambda-\mu S(t)-\frac{S(t)\int_{0}^{+\infty}i(t,a)da}{1+\int_{0}^{+\infty}\beta(a)i(t,a)da} +\eta\int_{0}^{+\infty}{i(t,a)da}\right],\\
\frac{\partial i(t,a)}{\partial t}+\frac{\partial i(t,a)}{\partial a} = -\tau i(t,a),\\
  i(t,0)=\tau\frac{S(t)\int_{0}^{+\infty}i(t,a)da}{1+\int_{0}^{+\infty}\beta(a)i(t,a)da}, t>0,\\
  S(0)=S_{0}\geq0, i(0,\cdot)=i_{0}\in L_{+} ^{1}((0, + \infty ),\mathbb{R}),
\end{array}
\right.
\end{equation}
with the new function $\beta(a)$ defined by
\begin{equation*}
  \beta(a)=\left\{
             \begin{array}{cc}
               \beta^{*}, & \mbox{  if   } a \geq 1,\\
               0, & \mbox{  otherwise}, \\
             \end{array}
           \right.
\end{equation*}
and
\begin{equation*}
  \int_{\tau}^{+\infty}{\beta^{*}e^{-a}da}=1\Leftrightarrow \beta^{*}=e^{\tau},
\end{equation*}
where $\tau\geq0$, $\beta^{*}>0$.
\noindent
In system (\ref{newsystem}), by setting $S(t):=\int_{0}^{+\infty}{\rho(t,a)da}$. We can rewrite the ordinary differential equation in (\ref{newsystem}) as an age-structured model
\begin{equation*}
  \left\{
     \begin{array}{ll}
       \frac{\partial \rho(t,a)}{\partial t}+\frac{\partial \rho(t,a)}{\partial a} =-\tau\mu \rho(t,a),\\
       \rho(t,0)= \tau G(i(t,a),\rho(t,a)), \\
       \rho(0,a)=\rho_{0}\in L^{1}((0,+\infty),\mathbb{R}), \\
     \end{array}
   \right.
\end{equation*}
where
\begin{equation*}
  G(i(t,a),\rho(t,a))= \Lambda-\frac{\int_{0}^{+\infty}{\rho(t,a)da}\int_{0}^{+\infty} i(t,a)da}{1+ \int_{0}^{+\infty}\beta(a) i(t,a)da} +\eta\int_{0}^{+\infty}{i(t,a)da}.
\end{equation*}
Accordingly, by setting $u(t,a)=\left(
                           \begin{array}{c}
                             i(t,a) \\
                             \rho(t,a) \\
                           \end{array}
                         \right)
$, we obtain the equivalent system of model (\ref{system})
\begin{equation}\label{systempartialwta}
\left\{
  \begin{array}{l}
    \frac{\partial u(t,a)}{\partial t}+\frac{\partial u(t,a)}{\partial a} =-\tau Q u(t,a), \\
    u(t,0)=\tau B(u(t,a)), \\
    u(0,\cdot)=u_{0}=\left(
                       \begin{array}{c}
                         i_{0} \\
                         \rho_{0} \\
                       \end{array}
                     \right)\in L^{1}((0,+\infty),\mathbb{R}^{2}),
     \\
  \end{array}
\right.
\end{equation}
where
\begin{equation*}
  \begin{array}{ccc}
    Q=\left(
        \begin{array}{cc}
          1 & 0 \\
          0 & \mu \\
        \end{array}
      \right)
     & \mbox{and} & B(u(t,a))=\left(
                                   \begin{array}{c}
                                     \frac{\int_{0}^{+\infty}{\rho(t,a)da}\int_{0}^{+\infty} i(t,a)da}{1+ \int_{0}^{+\infty}\beta(a) i(t,a)da} \\
                                     G(i(t,a),\rho(t,a)) \\
                                   \end{array}
                                 \right).\\
  \end{array}
\end{equation*}
\noindent
Following the results developed in Magal et al. \cite{MagalMcCluskeyWebb-AA-2010}, we consider the following Banach space
\begin{equation*}
  X={\mathbb{R}}^{2} \times L^{1}{((0,+\infty),{\mathbb{R}}^{2})}
\end{equation*}
with $\left \|
\left(
  \begin{array}{c}
    \alpha \\
    \psi\\
  \end{array}
\right)
     \right \|
     =\left \|\alpha\right\|_{{\mathbb{R}}^{2}}+\left\|\psi\right\|_{L^{1}{((0,+\infty),{\mathbb{R}}^{2})}}$.
\noindent
Define the linear operator $A_{\tau} : D(A_{\tau})\rightarrow X$ by
\begin{equation*}
  A_{\tau}\left(
     \begin{array}{c}
       0_{\mathbb{R}^{2}} \\
       \varphi \\
     \end{array}
   \right)
   =\left(
   \begin{array}{c}
     -\varphi(0) \\
     -\varphi'-\tau Q\varphi \\
   \end{array}
 \right)
\end{equation*}
with $D(A_{\tau})=\{0_{\mathbb{R}^{2}}\}\times W^{1,1}({(0,+\infty),{\mathbb{\mathbb{R}}}^{2}}) \subset X$, and the operator $H: \overline{D(A_{\tau})} \rightarrow X$ by
\begin{equation*}
  H\left(
  \left(
     \begin{array}{c}
       0_{\mathbb{R}^{2}} \\
       \varphi \\
     \end{array}
   \right)
   \right)
   =\left(
      \begin{array}{c}
        B(\varphi) \\
        0_{L^{1}} \\
      \end{array}
    \right).
\end{equation*}
The linear operator $A_{\tau}$ is non-densely defined owing to
\begin{equation*}
  X_{0}:=\overline{D(A_{\tau})}=\{0_{\mathbb{R}^{2}}\} \times L^{1}{((0,+\infty),{\mathbb{R}}^{2})}.
\end{equation*}
By denoting that
\begin{equation*}
  v(t)=\left(
         \begin{array}{c}
           0_{\mathbb{R}^{2}} \\
           u(t,\cdot) \\
         \end{array}
       \right),
\end{equation*}
we can rewrite system (\ref{systempartialwta}) as the following non-densely defined abstract Cauchy problem
\begin{equation}\label{nonddaCp}
  \left\{
    \begin{array}{l}
      \frac{dv(t)}{dt}  =  A_{\tau}v(t)+\tau H(v(t)), t\geq0, \\
      v(0)  =  \left(
                   \begin{array}{c}
                     0_{\mathbb{R}^{2}} \\
                     u_0 \\
                   \end{array}
                 \right)
                 \in \overline{D(A_{\tau})}. \\
    \end{array}
  \right.
\end{equation}
The global existence and uniqueness of solution of equation (\ref{nonddaCp}) follow from the results of Magal and Ruan \cite{MagalRuan-ADE-2009} and Magal\cite{Magal-EJDE-2001}.
\subsection{Equilibrium and linearized equation}

\subsubsection{Existence of positive equilibrium}

\noindent

If $\overline{v}(a)=\left(
                      \begin{array}{c}
                        0_{\mathbb{R}^{2}} \\
                        \overline{u}(a) \\
                      \end{array}
                    \right)
                    \in X_0
$ is a steady state of system (\ref{nonddaCp}), we have
\begin{equation*}
  \left(
     \begin{array}{c}
       0_{\mathbb{R}^{2}} \\
       \overline{u}(a) \\
     \end{array}
   \right)\in D(A_{\tau}) \mbox{  and  }
   A_{\tau}\left(
      \begin{array}{c}
        0_{\mathbb{R}^{2}} \\
        \overline{u}(a) \\
      \end{array}
    \right)+\tau H\left(\left(
               \begin{array}{c}
                 0_{\mathbb{R}^{2}} \\
                 \overline{u}(a) \\
               \end{array}
             \right)\right)=0,
\end{equation*}
which is equivalent to
\begin{equation*}
\left\{
  \begin{array}{l}
    -\overline{u}(0)+\tau B(\overline{u}(a))=0, \\
    -\overline{u}^{'}(a)-\tau Q\overline{u}(a)=0. \\
  \end{array}
\right.
\end{equation*}
Therefore, we obtain
\begin{equation}\label{overlinewa}
  \left.
    \begin{array}{ccccc}
      \overline{u}(a)  = \left(
                              \begin{array}{c}
                                \overline{i}(a) \\
                                 \overline{\rho}(a) \\
                              \end{array}
                            \right)
        = \left(
               \begin{array}{c}
                 \tau \frac{\overline{S}\int_{0}^{+\infty}\overline{i}(a)da}{1+ \int_{0}^{+\infty}\beta(a)\overline{i}(a)da}e^{-\tau a}\\
                  \tau \left(\Lambda-\frac{\overline{S}\int_{0}^{+\infty} \overline{i}(a)da}{1+ \int_{0}^{+\infty}\beta(a) \overline{i}(a)da} +\eta\int_{0}^{+\infty}{\overline{i}(a)da}\right)e^{-\mu\tau a} \\
               \end{array}
             \right)
        \\
    \end{array}
  \right.
\end{equation}
with $\overline{S}=\int_{0}^{+\infty}{\overline{\rho}(a)da}$.
\noindent
From (\ref{overlinewa}) we can get
\begin{equation}\label{overlineIa}
\overline{i}(a)=\frac{\tau\overline{ S}\int_{0}^{+\infty}\overline{i}(a)da}{1+\int_{0}^{+\infty}\beta(a)\overline{i}(a)da}e^{-\tau a}
\end{equation}
and
\begin{equation}\label{overlinerhoa}
 \overline{\rho}(a)=\tau \left(\Lambda-\frac{\overline{S}\int_{0}^{+\infty} \overline{i}(a)da}{1+ \int_{0}^{+\infty}\beta(a) \overline{i}(a)da} +\eta\int_{0}^{+\infty}{\overline{i}(a)da}\right)e^{-\mu\tau a}.
\end{equation}
On the basis of (\ref{overlineIa}), we can yield
\begin{equation}\label{one}
  1=\frac{\overline{S}}{1+\int_{0}^{+\infty}{\beta(a)\overline{i}(a)da}}
\end{equation}
and
\begin{equation}\label{intdeltaaoverlineIa}
  \int_{0}^{+\infty}{\beta(a)\overline{i}(a)da}=\frac{\tau\overline{S}\int_{0}^{+\infty}\overline{i}(a)da}{1+ \int_{0}^{+\infty}\beta(a)\overline{i}(a)da}\int_{0}^{+\infty}{\beta(a)e^{-\tau a}da}=\int_{0}^{+\infty}{\overline{i}(a)da}.
\end{equation}
Using the second equation of (\ref{overlinewa}), we obtain
\begin{equation}\label{odeLambda}
  \Lambda-\mu \overline{S}-\frac{\overline{S}\int_{0}^{+\infty} \overline{i}(a)da}{1+\int_{0}^{+\infty}\beta(a)\overline{i}(a)da} +\eta\int_{0}^{+\infty}{\overline{i}(a)da}=0.
\end{equation}
Substituting (\ref{one}) and (\ref{intdeltaaoverlineIa}) into (\ref{odeLambda}), we can rewrite (\ref{odeLambda}) as
\begin{equation*}
  \Lambda-\mu\overline{S}+(\eta-1)\int_{0}^{+\infty}{\overline{i}(a)da}=0.
\end{equation*}
Solving the above equation, we obtain
\begin{equation}\label{overlineS}
  \overline{S}=\frac{\Lambda}{\mu}+\frac{\eta-1}{\mu}\int_{0}^{+\infty}{\overline{i}(a)da}.
\end{equation}
In addition, it follows from (\ref{one}) that
\begin{equation}\label{subS}
 1+\int_{0}^{+\infty}{\beta(a)\overline{i}(a)da}=\overline{S}.
\end{equation}
Combining (\ref{overlineS}) with (\ref{subS}), we have
\begin{equation*}
\int_{0}^{+\infty}{\overline{i}(a)da}=\frac{\Lambda-\mu}{1+\mu-\eta}.
\end{equation*}
Therefore, in accordance with (\ref{overlineIa}) and (\ref{overlinerhoa}), we obtain the following lemma.

\begin{lemma}
System (\ref{nonddaCp}) has always the  equilibrium
  \begin{equation*}
  \overline{v}_{0}(a)=\left(
                        \begin{array}{c}
                          0_{\mathbb{R}^{2}} \\
                          \left(
                            \begin{array}{c}
                              0_{L^{1}} \\
                              \Lambda\tau e^{-\mu\tau a} \\
                            \end{array}
                          \right)
                           \\
                        \end{array}
                      \right).
\end{equation*}
Furthermore, there exists a unique positive equilibrium of system (\ref{nonddaCp})
\begin{equation*}
\overline{v}_{\tau}=\left(
                  \begin{array}{c}
                    0_{\mathbb{R}^{2}} \\
                    \overline{u}_{\tau} \\
                  \end{array}
                \right)=\left(
  \begin{array}{c}
    0_{\mathbb{R}^{2}} \\
    \left(
      \begin{array}{c}
        \frac{\Lambda-\mu}{1+\mu-\eta}\tau e^{-\tau a} \\
        \frac{\mu(1+\Lambda-\eta)}{1+\mu-\eta}\tau e^{-\mu\tau a} \\
      \end{array}
    \right)
  \end{array}
\right)
\end{equation*}
if and only if
\begin{equation*}
  \Lambda-\mu>0.
\end{equation*}
\end{lemma}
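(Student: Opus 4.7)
The plan is to solve the steady state equation $A_\tau \bar v+\tau H(\bar v)=0$ directly and then separate the trivial and non-trivial cases. Unpacking the definitions of $A_\tau$ and $H$, the equilibrium condition for $\bar v=(0_{\mathbb R^2},\bar u(\cdot))\in D(A_\tau)$ becomes the boundary value problem $\bar u'(a)=-\tau Q\bar u(a)$ together with $\bar u(0)=\tau B(\bar u)$. Since $Q$ is diagonal, I would integrate the two scalar linear ODEs immediately to obtain $\bar i(a)=\bar i(0)e^{-\tau a}$ and $\bar\rho(a)=\bar\rho(0)e^{-\mu\tau a}$. Thus every stationary profile is exponential and completely determined by the two scalars $\bar i(0),\bar\rho(0)\ge0$, so the whole problem reduces to the algebraic system coming from the boundary condition.

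I would then split into two cases. If $\bar i(0)=0$, then $\bar i\equiv 0$, the nonlinear incidence vanishes and the boundary condition on $\bar\rho$ collapses to $\bar\rho(0)=\tau\Lambda$, which yields the disease-free equilibrium $\bar v_0$ for every parameter choice. If $\bar i(0)>0$, then dividing the first boundary equation by $\int_0^{+\infty}\bar i(a)\,da>0$ gives $\bar S=1+\int_0^{+\infty}\beta(a)\bar i(a)\,da$, while multiplying that same equation by $\beta(a)$, integrating, and invoking the rescaled identity $\tau\int_0^{+\infty}\beta(a)e^{-\tau a}\,da=1$ (a direct consequence of Assumption \ref{assumption1} under the $\hat a=a/\tau$ scaling) collapses the nonlinear integral to the linear one, $\int_0^{+\infty}\beta(a)\bar i(a)\,da=\int_0^{+\infty}\bar i(a)\,da$. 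Substituting both relations into the steady state of the $\rho$-equation linearises it to
\begin{equation*}
\Lambda-\mu\bar S+(\eta-1)\int_{0}^{+\infty}\bar i(a)\,da=0,
\end{equation*}
and coupling this with $\bar S=1+\int_0^{+\infty}\bar i(a)\,da$ is a $2\times2$ linear system with the unique solution $\int_{0}^{+\infty}\bar i(a)\,da=(\Lambda-\mu)/(1+\mu-\eta)$ and $\bar S=(1+\Lambda-\eta)/(1+\mu-\eta)$.

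Finally, I would impose positivity of the components. Because $0<\mu,\eta<1$ forces $1+\mu-\eta>0$, the values above are strictly positive precisely when $\Lambda-\mu>0$, which simultaneously delivers existence and uniqueness of the positive equilibrium under that condition. Substituting $\bar S$ and $\int_0^{+\infty}\bar i(a)\,da$ back into the exponential profiles from the first step reproduces the explicit formula for $\bar v_\tau$ stated in the lemma. There is no genuine technical obstacle here; the one step that deserves care is verifying the cancellation $\tau\int_0^{+\infty}\beta(a)e^{-\tau a}\,da=1$, since without it the steady-state relations would remain genuinely nonlinear in $\int_0^{+\infty}\bar i(a)\,da$ and uniqueness would not reduce to a linear solve.
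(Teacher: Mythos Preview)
Your proposal is correct and follows essentially the same route as the paper: solve the ODE $\bar u'=-\tau Q\bar u$ to get exponential profiles, use the boundary condition together with the normalisation $\tau\int_0^{+\infty}\beta(a)e^{-\tau a}\,da=1$ to collapse the $\beta$-weighted integral to the plain one, and reduce to a linear $2\times2$ system for $\bar S$ and $\int_0^{+\infty}\bar i(a)\,da$. Your treatment is in fact slightly more complete than the paper's, since you explicitly separate the case $\bar i(0)=0$ and explicitly invoke $0<\mu,\eta<1$ to ensure $1+\mu-\eta>0$ for the positivity discussion.
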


\subsubsection{Linearized equation}


In this section, we need to get the linearized equation around the positive equilibrium $\overline{v}_{\tau}$. By using the following change of variable
\begin{equation*}
  w(t):= v(t)-\overline{v}_{\tau},
\end{equation*}
we obtain
\begin{equation}\label{system4}
  \left\{
    \begin{array}{cll}
      \frac{dw(t)}{dt} & = & A_{\tau}w(t)+\tau H(w(t)+\overline{v}_{\tau})-\tau H(\overline{v}_{\tau}), t\geq0, \\
      w(0) & = & \left(
                   \begin{array}{c}
                     0_{\mathbb{R}^{2}} \\
                     u_0-\overline{u}_{\tau} \\
                   \end{array}
                 \right)
                 =: w_0\in \overline{D(A_{\tau})}.
       \\
    \end{array}
  \right.
\end{equation}
Therefore the linearized equation (\ref{system4}) around the equilibrium $0$ is given by
\begin{equation}\label{systemlinear}
  \begin{array}{cc}
    \frac{dw(t)}{dt}=A_{\tau}w(t)+\tau DH(\overline{v}_{\tau})w(t) & \mbox{   for   } t\geq 0, w(t)\in X_{0}, \\
  \end{array}
\end{equation}
where
\begin{equation*}
    \begin{array}{cc}
      \tau DH(\overline{v}_{\tau})\left(
                        \begin{array}{c}
                          0_{\mathbb{R}^{2}} \\
                          \varphi \\
                        \end{array}
                      \right)=\left(
                                \begin{array}{c}
                                 \tau DB(\overline{u}_{\tau})(\varphi) \\
                                  0_{L^{1}} \\
                                \end{array}
                              \right)
       & \mbox{for all}\left(
                         \begin{array}{c}
                           0_{\mathbb{R}^{2}} \\
                           \varphi \\
                         \end{array}
                       \right)\in D(A_{\tau})\\
    \end{array}
\end{equation*}
with
\begin{equation*}
     \begin{array}{ccl}
       DB(\overline{u}_{\tau})(\varphi) & = & \left(
                              \begin{array}{cc}
                                1 & \frac{\int_{0}^{+\infty}{\overline{i}(a)da}}{1+\int_{0}^{+\infty}{\beta(a)\overline{i}(a)da}} \\
                                \eta-1 & -\frac{\int_{0}^{+\infty}{\overline{i}(a)da}}{1+\int_{0}^{+\infty}{\beta(a)\overline{i}(a)da}} \\
                              \end{array}
                            \right)\int_{0}^{+\infty}{\varphi(a)da} \\
        &  & +\left(
                              \begin{array}{cc}
                                -\frac{\int_{0}^{+\infty}{\overline{i}(a)da}}{1+\int_{0}^{+\infty}{\beta(a)\overline{i}(a)da}} & 0_{\mathbb{R}} \\
                                \frac{\int_{0}^{+\infty}{\overline{i}(a)da}}{1+\int_{0}^{+\infty}{\beta(a)\overline{i}(a)da}} & 0_{\mathbb{R}} \\
                              \end{array}
                            \right)
        \int_{0}^{+\infty}{\beta(a)\varphi(a)da}. \\
     \end{array}
\end{equation*}
Then we can rewrite system (\ref{system4}) as
\begin{equation}\label{fracdytdt}
    \begin{array}{cc}
      \frac{dw(t)}{dt}=B_{\tau}w(t)+\mathcal{H}(w(t)) & \mbox{   for  } t\geq0, \\
    \end{array}
\end{equation}
where
\begin{equation*}
  B_{\tau}:=A_{\tau}+\tau DH(\overline{v}_{\tau})
\end{equation*}
is a linear operator and
\begin{equation*}
  \mathcal{H}(w(t))=\tau H(w(t)+\overline{v}_{\tau})-\tau H(\overline{v}_{\tau})-\tau DH(\overline{v}_{\tau})w(t)
\end{equation*}
satisfying $\mathcal{H}(0)=0$ and $D\mathcal{H}(0)=0$.

\subsection{Characteristic equation}

\noindent

Let $\Omega := \{\lambda \in \mathbb{C} : Re(\lambda)>-\mu\tau\}$. Applying the results of Liu et al.\cite{LiuMagalRuan-ZAMP-2011}, we obtain the following result.
\begin{lemma}
For $\lambda\in \Omega$, $\lambda\in \rho(A_{\tau})$ and
\begin{equation*}
  (\lambda I-A_{\tau})^{-1}\left(
                               \begin{array}{c}
                                 \delta \\
                                 \psi\\
                               \end{array}
                             \right)
                             =\left(
                                \begin{array}{c}
                                  0_{\mathbb{R}^{2}} \\
                                  \varphi \\
                                \end{array}
                              \right)
                              \Leftrightarrow
                              \varphi(a)=e^{-\int_{0}^{a}{(\lambda I+\tau Q)dl}}\delta+\int_{0}^{a}{e^{-\int_{s}^{a}{(\lambda I+\tau Q)dl}}\psi(s)}ds
\end{equation*}
with $\left(
        \begin{array}{c}
          \delta \\
          \psi \\
        \end{array}
      \right)
      \in X
$ and $\left(
         \begin{array}{c}
           0_{\mathbb{R}^{2}} \\
           \varphi \\
         \end{array}
       \right)
       \in D(A_{\tau})
$. Moreover, $A_{\tau}$ is a Hille-Yosida operator and
\begin{equation}\label{Hille-Yosida}
  \left\|(\lambda I-A_{\tau})^{-n}\right\|\leq\frac{1}{(Re(\lambda)+\mu\tau)^{n}}, \forall \lambda\in\Omega,\forall n\geq 1.
\end{equation}
\end{lemma}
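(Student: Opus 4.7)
The plan is to establish both statements by directly solving the resolvent equation. Given $(\delta,\psi)^T \in X$ and $\lambda \in \Omega$, I look for $(0_{\mathbb{R}^2},\varphi)^T \in D(A_\tau)$ such that $(\lambda I-A_\tau)(0_{\mathbb{R}^2},\varphi)^T = (\delta,\psi)^T$. Reading off the two components and using the definition of $A_\tau$, this is equivalent to the boundary condition $\varphi(0)=\delta$ together with the first-order linear ODE
\begin{equation*}
\varphi'(a) + (\lambda I + \tau Q)\varphi(a) = \psi(a), \quad a\in(0,+\infty).
\end{equation*}
Since $Q = \mathrm{diag}(1,\mu)$ is constant and commutes with $\lambda I$, the integrating-factor method (equivalently, variation of parameters) yields the stated representation for $\varphi$. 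Uniqueness of the ODE solution gives injectivity, and a short computation shows the constructed $\varphi$ belongs to $W^{1,1}((0,+\infty),\mathbb{R}^2)$ whenever $\lambda\in\Omega$, proving $\lambda\in\rho(A_\tau)$.

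Next, for the Hille–Yosida estimate with $n=1$, I would estimate $\|\varphi\|_{L^1}$ directly from the explicit formula. Because $e^{-(\lambda I+\tau Q)a} = \mathrm{diag}(e^{-(\lambda+\tau)a},e^{-(\lambda+\mu\tau)a})$ and $0<\mu<1$ gives $\mu\tau\le\tau$, both diagonal entries are bounded in modulus by $e^{-(\mathrm{Re}(\lambda)+\mu\tau)a}$. The first term then contributes at most $\|\delta\|_{\mathbb{R}^2}/(\mathrm{Re}(\lambda)+\mu\tau)$, and applying Fubini to the convolution term gives
\begin{equation*}
\int_0^{+\infty}\!\!\int_0^a e^{-(\mathrm{Re}(\lambda)+\mu\tau)(a-s)}\|\psi(s)\|\,ds\,da = \frac{\|\psi\|_{L^1}}{\mathrm{Re}(\lambda)+\mu\tau}.
\end{equation*}
Adding the two bounds gives exactly $\|(\lambda I-A_\tau)^{-1}(\delta,\psi)^T\|_X \le \|(\delta,\psi)^T\|_X / (\mathrm{Re}(\lambda)+\mu\tau)$.

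For general $n\ge 2$, I would iterate the resolvent. Induction shows that the convolution kernel becomes $\frac{(a-s)^{n-1}}{(n-1)!}e^{-(\lambda I+\tau Q)(a-s)}$, so the same matrix-entry bound $e^{-(\mathrm{Re}(\lambda)+\mu\tau)(a-s)}$ together with Fubini produces the factor $1/(\mathrm{Re}(\lambda)+\mu\tau)^n$. The main obstacle is not conceptual but notational: one has to be careful with the two-component vector-valued exponential, the separate initial-data and convolution contributions, and the switch in order of integration. Once the $n=1$ case is done cleanly, the general case follows either by induction on $n$ or, more slickly, from the Laplace-transform identity $(\lambda I-A_\tau)^{-n} = \frac{(-1)^{n-1}}{(n-1)!}\frac{d^{n-1}}{d\lambda^{n-1}}(\lambda I-A_\tau)^{-1}$ applied to the already-known formula. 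Either route confirms inequality (\ref{Hille-Yosida}) and shows that $A_\tau$ is a Hille–Yosida operator.
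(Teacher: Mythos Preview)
Your proof is correct. The paper itself does not prove this lemma; it simply cites Liu, Magal and Ruan \cite{LiuMagalRuan-ZAMP-2011} for the result. What you have written is precisely the standard direct argument one would find in that reference: unpack the definition of $A_\tau$ to get the boundary-value problem $\varphi(0)=\delta$, $\varphi'+(\lambda I+\tau Q)\varphi=\psi$, solve by variation of constants, and then read off the $L^1$ resolvent bound from the explicit formula using $\mu\tau\le\tau$ and Fubini. The iteration (or the derivative-of-resolvent identity) for general $n$ is routine. So your approach is not different from the paper's in spirit---it simply supplies the omitted details rather than deferring to the citation.
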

\noindent
Let $A_0$ be the part of $A_{\tau}$ in $\overline{D(A_{\tau})}$, that is, $A_0 := D(A_0)\subset X \rightarrow X$. For $\left(
                                                                                                    \begin{array}{c}
                                                                                                      0_{\mathbb{R}^{2}} \\
                                                                                                      \varphi \\
                                                                                                    \end{array}
                                                                                                  \right)
                                                                                                  \in D(A_0)
$, we get
\begin{equation*}
  A_0\left(
       \begin{array}{c}
         0_{\mathbb{R}^{2}} \\
         \varphi \\
       \end{array}
     \right)
     =\left(
        \begin{array}{c}
          0_{\mathbb{R}^{2}} \\
          \hat{A_0}(\varphi) \\
        \end{array}
      \right),
\end{equation*}
where $\hat{A_0}(\varphi)=-\varphi '-\tau Q\varphi$ with $D(\hat{A_0})=\{\varphi \in W^{1,1}((0,+\infty),{\mathbb{R}}^{2}): \varphi(0)=0\}$.
\noindent
Note that $\tau DH(\overline{v}_{\tau}):D(A_{\tau}) \subset X \rightarrow X$ is a compact bounded linear operator. From (\ref{Hille-Yosida}) we have
\begin{equation*}
  \left\| T_{A_0}(t) \right\| \leq e^{-\mu\tau t} \mbox{  for  } t \geq 0.
\end{equation*}
Thus, we have
\begin{equation*}
  \omega_{0,ess}(A_0)\leq\omega_0((B_{\tau})_{0})\leq -\mu\tau.
\end{equation*}
By applying the perturbation results in Ducrot et al. \cite{DucrotLiuMagal-JMAA-2008}, we obtain
\begin{equation*}
  \omega_{0,ess}((A_{\tau}+\tau DH(\overline{v}_{\tau}))_{0})\leq-\mu\tau<0.
\end{equation*}
Hence we have the following proposition.
\begin{lemma}
The linear operator $A_{\tau}$ is a Hille-Yosida operator, and its parts $A_{0}$ in
$\overline{D(A_{\tau})}$ satisfies
\begin{equation*}
  \omega_{0,ess}(A_{0})<0.
\end{equation*}
\end{lemma}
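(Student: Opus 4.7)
The plan is essentially bookkeeping: both assertions follow directly from the resolvent estimate established in the preceding lemma together with standard facts about Hille-Yosida operators, so no genuinely new computation is required.

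For the first assertion I would simply cite the inequality $\|(\lambda I - A_{\tau})^{-n}\| \le (Re(\lambda)+\mu\tau)^{-n}$ valid for every $\lambda \in \Omega$ and every $n \ge 1$, which is exactly the hypothesis of the Hille-Yosida theorem with constant $M=1$ and shift $\omega = -\mu\tau$. This immediately yields that $A_{\tau}$ is a Hille-Yosida operator and that its exponential growth bound is at most $-\mu\tau$.

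For the second assertion I would carry out three short steps. First, invoke the standard fact that since $A_{\tau}$ is Hille-Yosida, its part $A_{0}$ in the closed subspace $X_{0}=\overline{D(A_{\tau})}$ is the infinitesimal generator of a $C_{0}$-semigroup $\{T_{A_{0}}(t)\}_{t\ge0}$ on $X_{0}$; alternatively, one can read off the action of $T_{A_{0}}(t)$ by integrating the age-transport system along characteristics, producing the explicit decay factor $e^{-\tau Q a}$. Second, upgrade the resolvent estimate to the semigroup bound $\|T_{A_{0}}(t)\| \le e^{-\mu\tau t}$, so that $\omega_{0}(A_{0}) \le -\mu\tau$. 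Third, apply the general inequality $\omega_{0,ess}(A_{0}) \le \omega_{0}(A_{0})$, which gives $\omega_{0,ess}(A_{0}) \le -\mu\tau < 0$ as claimed.

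The point I would flag as requiring care, although it is not itself an obstacle, is the distinction between the part $A_{0}$ of the unperturbed operator $A_{\tau}$ (which is what the lemma literally asserts) and the part $(B_{\tau})_{0} = (A_{\tau}+\tau DH(\overline{v}_{\tau}))_{0}$ of the linearized operator at the positive equilibrium (which is what the Hopf bifurcation analysis will actually need). For the statement as written the argument above is complete; for the perturbed statement used later one additionally invokes the perturbation result of Ducrot, Liu and Magal, noting that $\tau DH(\overline{v}_{\tau})$ is a compact bounded linear operator because its range is two-dimensional, depending on $\varphi$ only through the scalar integrals $\int_{0}^{+\infty}\varphi(a)\,da$ and $\int_{0}^{+\infty}\beta(a)\varphi(a)\,da$, so the essential growth bound is preserved under this compact perturbation. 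No new analytical difficulty arises at any step.
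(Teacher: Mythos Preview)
Your proposal is correct and follows essentially the same route as the paper: the Hille-Yosida estimate from the preceding lemma gives the semigroup bound $\|T_{A_{0}}(t)\|\le e^{-\mu\tau t}$, whence $\omega_{0,ess}(A_{0})\le\omega_{0}(A_{0})\le-\mu\tau<0$. Your additional remark about the compact perturbation (finite rank of $\tau DH(\overline{v}_{\tau})$) and the Ducrot--Liu--Magal result to transfer the bound to $(B_{\tau})_{0}$ is exactly what the paper does immediately afterward.
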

\noindent
For convenience, we set $C:=\tau DH(\overline{v}_{\tau})$. Let $\lambda\in \Omega$. Since $(\lambda I-A_{\tau})$ is invertible, and
\begin{equation}\label{invertible}
  \begin{array}{ccl}
    (\lambda I-B_{\tau})^{-1} & = & (\lambda I-(A_{\tau}+C))^{-1} \\
     & = & (\lambda I-A_{\tau})^{-1}(I-C(\lambda I-A_{\tau})^{-1})^{-1}, \\
  \end{array}
\end{equation}
it follows that $\lambda I-(A_{\tau}+\tau DH(\overline{v}_{\tau}))$ is invertible if and only if $I-C(\lambda I-A_{\tau})^{-1}$ is invertible. Moreover, when $I-C(\lambda I-A_{\tau})^{-1}$ is invertible, we have
\begin{equation*}
  (I-C(\lambda I-A_{\tau})^{-1})\left(
     \begin{array}{c}
       \delta \\
       \varphi \\
     \end{array}
   \right)
   =\left(
      \begin{array}{c}
        \gamma \\
        \psi \\
      \end{array}
    \right).
\end{equation*}
It follows that
\begin{equation*}
  \left(
    \begin{array}{l}
      \delta \\
      \varphi \\
    \end{array}
  \right)
  -\tau DH(\overline{v}_{\tau})(\lambda I-A_{\tau})^{-1}\left(
                                                \begin{array}{c}
                                                  \delta \\
                                                  \varphi \\
                                                \end{array}
                                              \right)
                                              =\left(
                                                 \begin{array}{c}
                                                   \gamma \\
                                                   \psi \\
                                                 \end{array}
                                               \right).
\end{equation*}
Then we obtain
\begin{equation*}
 \left\{
    \begin{array}{l}
      \delta-\tau DB(\overline{u}_{\tau})\left(e^{-\int_{0}^{a}{(\lambda I+\tau Q)dl}}\delta+\int_{0}^{a}{e^{-\int_{s}^{a}{(\lambda I+\tau Q)dl}}\varphi(s)}ds\right)=\gamma, \\
      \varphi=\psi, \\
    \end{array}
  \right.
\end{equation*}
i.e.,
\begin{equation*}
  \left\{
    \begin{array}{l}
 \delta-\tau DB(\overline{u}_{\tau})\left(e^{  -\int_{0}^{a}{(\lambda I+\tau Q)dl}}\delta\right)=\gamma+\tau DB(\overline{u}_{\tau})\left(\int_{0}^{a}{e^{-\int_{s}^{a}{(\lambda I+\tau Q)dl}}\varphi(s)}ds\right), \\
      \varphi=\psi. \\
    \end{array}
  \right.
\end{equation*}
From the formula of $DB(\overline{w})$ we know that
\begin{equation*}
  \left\{
    \begin{array}{l}
      \Delta(\lambda)\delta=\gamma+K(\lambda,\psi), \\
      \varphi=\psi, \\
    \end{array}
  \right.
\end{equation*}
where
\begin{equation}\label{Deltalambda}
   \begin{array}{ccl}
     \Delta(\lambda) & = & I-\left(
                              \begin{array}{cc}
                                1 & \frac{\int_{0}^{+\infty}{\overline{i}(a)da}}{1+\int_{0}^{+\infty}{\beta(a)\overline{i}(a)da}} \\
                                \eta-1 & -\frac{\int_{0}^{+\infty}{\overline{i}(a)da}}{1+\int_{0}^{+\infty}{\beta(a)\overline{i}(a)da}} \\
                              \end{array}
                            \right)\tau
                             \int_{0}^{+\infty}{e^{-\int_{0}^{a}{(\lambda I+\tau Q)dl}}da} \\
      &  &  -\left(
                              \begin{array}{cc}
                                -\frac{\int_{0}^{+\infty}{\overline{i}(a)da}}{1+\int_{0}^{+\infty}{\beta(a)\overline{i}(a)da}} & 0_{\mathbb{R}} \\
                                \frac{\int_{0}^{+\infty}{\overline{i}(a)da}}{1+\int_{0}^{+\infty}{\beta(a)\overline{i}(a)da}} & 0_{\mathbb{R}} \\
                              \end{array}
                            \right)\tau
                             \int_{0}^{+\infty}{\beta(a)e^{-\int_{0}^{a}{(\lambda I+\tau Q)dl}}}da \\
   \end{array}
\end{equation}
and
\begin{equation}\label{Klambdapsi}
  K(\lambda,\psi)=\tau DB(\overline{u}_{\tau})\left(\int_{0}^{a}{e^{-\int_{s}^{a}{(\lambda I+\tau Q)dl}}\psi(s)}ds\right).
\end{equation}
Whenever $\Delta(\lambda)$ is invertible, we have
\begin{equation}\label{xi}
  \delta=(\Delta(\lambda))^{-1}(\gamma+K(\lambda,\psi)).
\end{equation}
From the above discussion and by using the proof of Lemma 3.5 in \cite{WangLiu-JMAA-2012}, we obtain the following lemma.
\begin{lemma}
The following results hold
\begin{itemize}
  \item [(i)] $\sigma(B_{\tau})\cap\Omega=\sigma_{p}(B_{\tau})\cap\Omega=\{\lambda\in\Omega: \det(\Delta(\lambda))=0\}$;
  \item [(ii)] If $\lambda\in\rho(B_{\tau})\cap\Omega$, we have the following formula for resolvent
  \begin{equation}\label{lambdalambdaI}
    (\lambda I -B_{\tau})^{-1}\left(
                                \begin{array}{c}
                                  \delta \\
                                  \varphi \\
                                \end{array}
                              \right)
                              =\left(
                                 \begin{array}{c}
                                   0_{\mathbb{R}^{2}} \\
                                   \psi \\
                                 \end{array}
                               \right),
  \end{equation}
  where
  \begin{equation*}
    \psi(a)= e^{-\int_{0}^{a}{(\lambda I+\tau Q)dl}}(\Delta(\lambda))^{-1}\left[\delta+K(\lambda,\varphi)\right]+\int_{0}^{a}
    {e^{-\int_{s}^{a}{(\lambda I+\tau Q)dl}}}\varphi(s)ds
  \end{equation*}
  with $\Delta(\lambda)$ and $K(\lambda,\varphi)$ defined in (\ref{Deltalambda}) and (\ref{Klambdapsi}).
\end{itemize}
\end{lemma}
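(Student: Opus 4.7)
The plan is to leverage the reductions already performed just before the statement. For (i), the factorisation (\ref{invertible}) shows that $\lambda I - B_\tau$ is invertible if and only if $I - C(\lambda I - A_\tau)^{-1}$ is invertible, and the computation running from the line below (\ref{invertible}) down to (\ref{xi}) reduces the latter to invertibility of the $2\times 2$ matrix $\Delta(\lambda)$ on $\mathbb{C}^2$. Taking complements immediately yields $\sigma(B_\tau)\cap\Omega = \{\lambda\in\Omega:\det\Delta(\lambda)=0\}$.

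To upgrade $\sigma(B_\tau)\cap\Omega$ to $\sigma_p(B_\tau)\cap\Omega$, I would exhibit an explicit eigenvector whenever $\det\Delta(\lambda)=0$: pick $\delta\in\mathbb{C}^2\setminus\{0\}$ with $\Delta(\lambda)\delta=0$ and set $\varphi(a):=e^{-\int_0^a(\lambda I+\tau Q)dl}\delta$. Since $\lambda I + \tau Q = \mathrm{diag}(\lambda+\tau,\lambda+\tau\mu)$ and $\mathrm{Re}(\lambda)>-\mu\tau$, one has $\varphi\in W^{1,1}((0,+\infty),\mathbb{R}^2)$, so $(0_{\mathbb{R}^2},\varphi)^T\in D(A_\tau)$. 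Using $\varphi'(a)=-(\lambda I+\tau Q)\varphi(a)$ together with $\varphi(0)=\delta$, the defining action of $A_\tau$ and $DH(\overline{v}_\tau)$ reduces the eigenvalue equation $(B_\tau-\lambda I)(0_{\mathbb{R}^2},\varphi)^T=0$ to the single algebraic identity $\tau DB(\overline{u}_\tau)(\varphi)=\delta$, which is precisely $\Delta(\lambda)\delta=0$ by the definition (\ref{Deltalambda}).

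For (ii), given $\lambda\in\rho(B_\tau)\cap\Omega$ and writing $(0_{\mathbb{R}^2},\psi)^T:=(\lambda I-B_\tau)^{-1}(\delta,\varphi)^T$, I would rearrange the defining equation into
\begin{equation*}
(\lambda I - A_\tau)\left(\begin{array}{c} 0_{\mathbb{R}^2}\\ \psi\end{array}\right) = \left(\begin{array}{c}\delta + \tau DB(\overline{u}_\tau)(\psi)\\ \varphi\end{array}\right),
\end{equation*}
apply the resolvent formula for $A_\tau$ from the preceding lemma to obtain
\begin{equation*}
\psi(a) = e^{-\int_0^a(\lambda I+\tau Q)dl}\bigl(\delta + \tau DB(\overline{u}_\tau)(\psi)\bigr) + \int_0^a e^{-\int_s^a(\lambda I+\tau Q)dl}\varphi(s)\,ds,
\end{equation*}
and then close the loop by applying $\tau DB(\overline{u}_\tau)$ to this identity. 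The resulting self-consistency equation $\tau DB(\overline{u}_\tau)(\psi)=(I-\Delta(\lambda))(\delta+\tau DB(\overline{u}_\tau)(\psi))+K(\lambda,\varphi)$ rearranges to $\psi(0)=\delta+\tau DB(\overline{u}_\tau)(\psi)=(\Delta(\lambda))^{-1}[\delta+K(\lambda,\varphi)]$, and substituting back into the formula for $\psi$ reproduces (\ref{lambdalambdaI}).

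The only genuinely non-routine ingredient, and the step I expect to be the main obstacle, is the matrix identity $\tau DB(\overline{u}_\tau)(e^{-\int_0^a(\lambda I+\tau Q)dl}\delta) = (I-\Delta(\lambda))\delta$. This is just (\ref{Deltalambda}) read in reverse, but both the eigenvector construction in (i) and the self-consistency step in (ii) turn on it. Once it is recognised, everything else is a systematic translation between the spectral statement and the explicit resolvent formula for $A_\tau$, mirroring the argument of Lemma 3.5 in \cite{WangLiu-JMAA-2012}.
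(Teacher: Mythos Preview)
Your proposal is correct and follows essentially the same approach as the paper: both reduce the resolvent of $B_\tau$ to the $2\times 2$ matrix $\Delta(\lambda)$ via the resolvent formula for $A_\tau$ and the identities (\ref{Deltalambda})--(\ref{Klambdapsi}), and the paper itself explicitly cites Lemma~3.5 in \cite{WangLiu-JMAA-2012} for the details you have written out. Your direct fixed-point argument for (ii) and explicit eigenvector construction for (i) are in fact more detailed than the paper's one-line reference, but mathematically identical to what the preceding discussion and the cited lemma provide.
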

\noindent
Under Assumption \ref{assumption1}, we have
\begin{equation}\label{intea}
  \int_{0}^{+\infty}{e^{-\int_{0}^{a}({\lambda I+\tau Q})dl}}da=
  \left(
    \begin{array}{cc}
      \frac{1}{\lambda+\tau} & 0 \\
      0 & \frac{1}{\lambda+\mu\tau} \\
    \end{array}
  \right)
\end{equation}
and
\begin{equation}\label{intbetaea}
  \int_{0}^{+\infty}{\beta(a)e^{-\int_{0}^{a}({\lambda I+\tau Q})dl}}da=
  \left(
    \begin{array}{cc}
      \frac{\beta^{*}e^{-(\lambda+\tau)}}{\lambda+\tau} & 0 \\
      0 & \frac{\beta^{*}e^{-(\lambda+\mu\tau)}}{\lambda+\mu\tau} \\
    \end{array}
  \right).
\end{equation}
It follows from (\ref{Deltalambda}), (\ref{intea}) and (\ref{intbetaea}) that the characteristic equation at any feasible steady state is
\begin{equation}\label{characteristicequation}
     \begin{array}{ccl}
       \det(\Delta(\lambda)) & = & \left|
  \begin{array}{ll}
                  1-\frac{\tau}{\lambda+\tau}+\frac{\xi}{\xi+1}\frac{\tau e^{-\lambda}}{\lambda+\tau}& -\frac{\xi}{\xi+1}\frac{\tau}{\lambda+\mu\tau} \\
                  -\frac{\tau(\eta-1)}{\lambda+\tau}-\frac{\xi}{\xi+1}\frac{\tau e^{-\lambda}}{\lambda+\tau} & 1+\frac{\xi}{\xi+1}\frac{\tau}{\lambda+\mu\tau}\\
                 \end{array}
  \right| \\
        & = & \frac{\lambda^{2}+\tau B\lambda+\tau^{2} C+\tau D\lambda e^{-\lambda}+\tau^{2} Ee^{-\lambda}}{(\lambda+\tau)(\lambda+\mu\tau)} \\
        & \triangleq & \frac{\tilde{f}(\lambda)}{\tilde{g}(\lambda)}=0, \\
     \end{array}
\end{equation}
where $\xi=\int_{0}^{+\infty}{\overline{i}(a)da}$, $B=\frac{\mu(\xi+1)+\xi}{\xi+1}$, $C=\frac{\xi(1-\eta)}{\xi+1}$, $D=\frac{\xi}{\xi+1}$, $E=\frac{\mu\xi}{\xi+1}$, $\tilde{f}(\lambda)=\lambda^{2}+\tau B\lambda+\tau^{2} C+\tau D\lambda e^{-\lambda}+\tau^{2} Ee^{-\lambda}$ and $\tilde{g}(\lambda)=(\lambda+\tau)(\lambda+\mu\tau)$.
Let
\begin{equation*}
  \lambda=\tau\zeta.
\end{equation*}
Then we get
\begin{equation}\label{characteristicequationg}
  \tilde{f}(\lambda)=\tilde{f}(\tau\zeta):=\tau^{2}g(\zeta)=\tau^{2}(\zeta^{2}+ B\zeta+C+ D\zeta e^{-\tau\zeta}+ Ee^{-\tau\zeta}).
\end{equation}
It is easy to see that
\begin{equation*}
  \{\lambda\in\Omega:\det(\Delta(\lambda))=0\}=\{\lambda=\tau\zeta\in\Omega:g(\zeta)=0\}.
\end{equation*}

\section{Existence of Hopf bifurcation}

\noindent

In this section, we study the existence of Hopf bifurcation by applying the Hopf bifurcation theory to the Cauchy problem (\ref{nonddaCp}).
From (\ref{characteristicequationg}), we have
\begin{equation}\label{characteristicequation3}
 g(\zeta)=\zeta^2+B\zeta+C+D\zeta e^{-\tau\zeta}+E e^{-\tau\zeta},
\end{equation}
where
\begin{equation}\label{BCDE}
  \begin{array}{ccccc}
    D=\frac{\Lambda-\mu}{1+\Lambda-\eta}, & B=\frac{\mu(\Lambda-\eta)+\Lambda}{\Lambda-\mu}D, & C=(1-\eta)D & \mbox{ and } & E=\mu D. \\
  \end{array}
\end{equation}
It is obvious that $\zeta =0$ is not a eigenvalue of (\ref%
{characteristicequation3}). Let $\zeta=i\omega (\omega>0)$ be a purely imaginary root of $g(\zeta)=0$. Then we have
\begin{equation*}
  -\omega^{2}+iB\omega+C+iD\omega e^{-i\omega\tau}+Ee^{-i\omega\tau}=0.
\end{equation*}
Separating the real part and the imaginary part in the above equation, we yield
\begin{equation}\label{realimaginary}
  \left\{
     \begin{array}{l}
       \omega^{2}-C=D\omega\sin(\omega\tau)+E\cos(\omega\tau), \\
       -B\omega=D\omega\cos(\omega\tau)-E\sin(\omega\tau). \\
     \end{array}
   \right.
\end{equation}
 Consequently, we obtain
\begin{equation*}
  (\omega^{2}-C)^{2}+(-B\omega)^{2}=(D\omega)^2+E^{2},
\end{equation*}
i.e.
\begin{equation}\label{omega34}
  \omega^{4}+(B^{2}-2C-D^{2})\omega^{2}+C^{2}-E^{2}=0.
\end{equation}
Set $\sigma=\omega^{2}$, (\ref{omega34}) becomes
\begin{equation}\label{siama2}
 \sigma^{2}+(B^{2}-2C-D^{2})\sigma+C^{2}-E^{2}=0.
\end{equation}
Let $\sigma_{1}$ and $\sigma_{2}$ denote two roots of (\ref{siama2}), then we find
\begin{equation}\label{sigma1sigma2}
  \sigma_{1}+\sigma_{2}=-(B^{2}-2C-D^{2}), \sigma_{1}\sigma_{2}=C^{2}-E^{2}.
\end{equation}
We observe that when $\Lambda-\mu>0$, we get $C+E=\frac{(\Lambda-\mu)(1+\mu-\eta)}{1+\Lambda-\eta}>0$. accordingly when $C-E<0(i.e., \eta+\mu>1)$, it is apparent from (\ref{sigma1sigma2}) that (\ref{siama2}) has only one positive real root $\sigma_{0}$. Therefore (\ref{omega34}) has only one positive real root $\omega_{0}=\sqrt{\sigma_{0}}$. From (\ref{realimaginary}), we can yield that $g(\zeta)=0$ with $\tau=\tau_{k}$, $k=0,1,2,\cdots$ has a pair of purely imaginary roots $\pm i\omega$, where
\begin{equation*}
  \omega_{0}^{2}=\frac{-(B^{2}-2C-D^{2})+\sqrt{(B^{2}-2C-D^{2})^{2}-4(C^{2}-E^{2})}}{2}
\end{equation*}
and
\begin{equation}\label{tauk}
  \tau_{k}=\left\{
             \begin{array}{l}
               \frac{1}{\omega_{0}}\left(\arccos\frac{(E-BD)\omega_{0}^{2}-CE}{D^{2}\omega_{0}^2+E^{2}}+2k\pi\right),
               \mbox{  if  } \frac{\omega_{0}(D\omega_{0}^{2}+BE-CD)}{D^{2}\omega_{0}^2+E^{2}}\geq 0,\\
                \frac{1}{\omega_{0}}\left(2\pi-\arccos\frac{(E-BD)\omega_{0}^{2}-CE}{D^{2}\omega_{0}^2+E^{2}}+2k\pi\right),
                \mbox{  if  } \frac{\omega_{0}(D\omega_{0}^{2}+BE-CD)}{D^{2}\omega_{0}^2+E^{2}}< 0,\\
             \end{array}
           \right.
\end{equation}
for $k=0,1,2,\cdots.$
\begin{assumption}\label{assumption2}
  Assume that $\Lambda-\mu>0$, $C-E<0$, $2E-BD<0$, $2C-B^{2}<0$, and $E(B^{2}-2C)-BCD<0$, where $B$, $C$, $D$ and $E$ are given by (\ref{BCDE}).
\end{assumption}
\begin{lemma}
Let Assumption \ref{assumption1} and \ref{assumption2} hold, then
\begin{equation*}
  \frac{\mbox{d}g(\zeta)}{\mbox{d}\zeta}\Big|_{\zeta=i\omega_{0}}\neq0.
\end{equation*}
Hence $\zeta=i\omega_{0}$ is a simple root of (\ref{characteristicequation3}).
\end{lemma}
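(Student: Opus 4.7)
The plan is a direct proof by contradiction, using the constraint $g(i\omega_0)=0$ to reduce $g'(i\omega_0)$ to a manageable expression and then showing that the resulting two real equations cannot simultaneously hold under Assumption~\ref{assumption2}.

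First I would differentiate the characteristic function (\ref{characteristicequation3}) to obtain
\begin{equation*}
g'(\zeta) = 2\zeta + B + \bigl[D - \tau(D\zeta+E)\bigr]e^{-\tau\zeta}.
\end{equation*}
The key observation is that $g(i\omega_0)=0$ yields $(iD\omega_0+E)e^{-i\tau\omega_0} = \omega_0^2 - iB\omega_0 - C$, which I would multiply by $\tau$ and substitute into the above to eliminate the term $\tau(D\zeta+E)e^{-\tau\zeta}$. This leaves an expression in which the only remaining transcendental factor is $De^{-i\tau\omega_0}$:
\begin{equation*}
g'(i\omega_0) = \bigl[B + \tau(C-\omega_0^2)\bigr] + De^{-i\tau\omega_0} + i\omega_0(2+\tau B).
\end{equation*}

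Next I would assume, toward a contradiction, that $g'(i\omega_0)=0$ and split into real and imaginary parts:
\begin{equation*}
B + \tau(C-\omega_0^2) + D\cos(\tau\omega_0) = 0,\qquad \omega_0(2+\tau B) = D\sin(\tau\omega_0).
\end{equation*}
Into these I would insert the closed-form values
\begin{equation*}
\cos(\tau\omega_0) = \frac{(E-BD)\omega_0^2 - CE}{D^2\omega_0^2+E^2},\qquad \sin(\tau\omega_0) = \frac{\omega_0(D\omega_0^2+BE-CD)}{D^2\omega_0^2+E^2}
\end{equation*}
obtained by solving (\ref{realimaginary}), and then eliminate $\tau$ between the two equations. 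Using the quadratic relation $\omega_0^4 + (B^2-2C-D^2)\omega_0^2 + C^2-E^2 = 0$ satisfied by $\omega_0$, the combined identity reduces to a sign inequality whose orientation is pinned down by the hypotheses $2E-BD<0$, $2C-B^2<0$ and $E(B^2-2C)-BCD<0$ of Assumption~\ref{assumption2}; this yields the required contradiction.

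The hard part will be this last algebraic reduction: after substitution the expressions are bulky rational combinations of $B,C,D,E$ and $\omega_0^2$ with $\tau$ still coupled through the imaginary-part equation, and the delicate step is arranging the cancellations so that each of the three sign conditions in Assumption~\ref{assumption2} appears with a definite, consistent orientation. The remaining hypotheses $\Lambda-\mu>0$ and $C-E<0$ play the supporting role of ensuring $\omega_0>0$, positive denominators, and the existence of the unique positive root $\sigma_0$ of (\ref{siama2}) via $\sigma_1\sigma_2 = C^2-E^2<0$.
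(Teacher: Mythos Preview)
Your setup through the real/imaginary split is correct, and your substitution trick (using $g(i\omega_0)=0$ to remove the $\tau(D\zeta+E)e^{-\tau\zeta}$ term) is legitimate. The gap is in the step you yourself flag as ``the hard part'': eliminating $\tau$ between the two equations is precisely the wrong move. If you carry it out, the resultant is
\[
D^{2}\omega_0^{4}+2E^{2}\omega_0^{2}+E^{2}(B^{2}-2C)-C^{2}D^{2}=0,
\]
and even after substituting the quartic for $\omega_0^4$ this does \emph{not} reduce to an expression whose sign is determined by the three inequalities $2E-BD<0$, $2C-B^{2}<0$, $E(B^{2}-2C)-BCD<0$ in Assumption~\ref{assumption2}. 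Those three hypotheses are not arbitrary: they are tailored to a polynomial that still contains $\tau$, and by eliminating $\tau$ you throw away the one datum ($\tau_k>0$) that makes them effective.

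The paper proceeds differently at exactly this point. It does \emph{not} eliminate $\tau$; instead it equates the expression for $\tan(\omega_0\tau_k)$ obtained from the two $g'=0$ equations with the one obtained from (\ref{realimaginary}), cross-multiplies, and lands on
\[
-2\tau_k D^{2}\omega_0^{4}+q_1\omega_0^{2}+q_2=0,
\]
where $q_1=[D^{2}(2C-B^{2})-2E^{2}]\tau_k+D(2E-BD)$ and $q_2=E^{2}(2C-B^{2})\tau_k+D[E(B^{2}-2C)-BCD]$. Each of the three terms is then a sum of pieces that are negative precisely because $\tau_k>0$ and the three sign hypotheses hold; hence the left side is strictly negative, giving the contradiction. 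In other words: keep $\tau$, form a polynomial in $\omega_0^2$ whose coefficients are \emph{linear in $\tau$}, and read off the signs. Your route eliminates $\tau$ first and then tries to invoke the same three inequalities, but at that stage they no longer line up with the terms you have.
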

\begin{proof}
According to (\ref{characteristicequation3}), we obtain
\begin{equation*}
 \frac{\mbox{d}g(\zeta)}{\mbox{d}\zeta}\Big|_{\zeta=i\omega_{0}}=\left(2\zeta+B+(D-\tau D\zeta-\tau E)e^{-\tau\zeta}\right)\Big|_{\zeta=i\omega_{0}}.
\end{equation*}
Therefore, we find
\begin{equation*}
  \frac{\mbox{d}g(\zeta)}{\mbox{d}\zeta}\Big|_{\zeta=i\omega_{0}}=0\Leftrightarrow\left\{
                                                                        \begin{array}{l}
                                                                          B=\tau_{k} D \omega_{0}\sin(\omega_{0}\tau_{k})-(D-\tau_{k} E)\cos(\omega_{0}\tau_{k}), \\
                                                                          2\omega_{0}=\tau_{k} D \omega_{0}\cos(\omega_{0}\tau_{k})+(D-\tau_{k} E)\sin(\omega_{0}\tau_{k}). \\
                                                                        \end{array}
                                                                      \right.
\end{equation*}
Thus, we have
\begin{equation*}
  \tan(\omega_{0}\tau_{k})=\frac{\omega_{0}[(BD-2E)\tau_{k}+2D]}{(2D\omega_{0}^{2}+BE)\tau_{k}-BD}.
\end{equation*}
It follows from (\ref{realimaginary}) that
\begin{equation*}
  \tan(\omega_{0}\tau_{k})=-\frac{\omega_{0}(D\omega_{0}^{2}+BE-CD)}{\omega_{0}^{2}(BD-E)+CE}.
\end{equation*}
Consequently, we can yield
\begin{equation*}
  -2\tau_{k} D^{2}\omega_{0}^{4}+q_{1}\omega_{0}^{2}+q_{2}=0,
\end{equation*}
where
\begin{equation*}
       q_{1}=[D^{2}(2C-B^{2})-2E^{2}]\tau_{k}+D(2E-BD) \\
\end{equation*}
and
\begin{equation*}
q_{2}=E^{2}(2C-B^{2})\tau_{k}+D[E(B^{2}-2C)-BCD]. \\
\end{equation*}
Note that $\tau_{k}>0$, $\Lambda-\mu>0$, $D>0$, and $E>0$. If $2E-BD<0$, $2C-B^{2}<0$, and $E(B^{2}-2C)-BCD<0$, then
\begin{equation*}
  -2\tau_{k} D^{2}\omega_{0}^{4}+q_{1}\omega_{0}^{2}+q_{2}<0.
\end{equation*}
Accordingly, if assumption \ref{assumption1} and \ref{assumption2} hold, then $ \frac{\mathrm{d}g(\zeta)}{\mbox{d}\zeta}\Big|_{\zeta=i\omega_{0}}\neq0$.
\end{proof}
\begin{lemma}
  Let Assumption \ref{assumption1} and \ref{assumption2} hold. Denote the root $\zeta(\tau)=\alpha(\tau)+i\omega(\tau)$ of $g(\zeta)=0$ satisfying $\alpha(\tau_{k})=0$ and $\omega(\tau_{k})=\omega_{0}$, where $\tau_{k}$ is defined in (\ref{tauk}). Then
\begin{equation*}
  \alpha^{'}(\tau_{k})=\frac{\mbox{d}Re(\zeta)}{\mbox{d}\tau}\Big|_{\tau=\tau_{k}}>0.
\end{equation*}
\end{lemma}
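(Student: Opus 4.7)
The plan is to use implicit differentiation of $g(\zeta,\tau)=0$ with respect to $\tau$ and then, following the standard trick for delay equations, work with the reciprocal $(d\zeta/d\tau)^{-1}$, whose real part has the same sign as $\mathrm{Re}(d\zeta/d\tau)$ at a purely imaginary root. Differentiating \eqref{characteristicequation3} gives
\[
\frac{d\zeta}{d\tau}\bigl[2\zeta+B+\bigl(D-\tau(D\zeta+E)\bigr)e^{-\tau\zeta}\bigr]-\zeta(D\zeta+E)e^{-\tau\zeta}=0,
\]
so that
\[
\left(\frac{d\zeta}{d\tau}\right)^{-1}=\frac{2\zeta+B}{\zeta(D\zeta+E)e^{-\tau\zeta}}+\frac{D}{\zeta(D\zeta+E)}-\frac{\tau}{\zeta}.
\]

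Next I would eliminate the exponential by using the characteristic equation itself in the form $(D\zeta+E)e^{-\tau\zeta}=-(\zeta^{2}+B\zeta+C)$, which turns the first summand into $-(2\zeta+B)/[\zeta(\zeta^{2}+B\zeta+C)]$. Evaluating at $\zeta=i\omega_{0}$, the term $-\tau_{k}/(i\omega_{0})$ is purely imaginary and contributes nothing to the real part, so only the first two summands matter. I would rationalize both fractions by multiplying by the complex conjugate of the denominator, obtaining
\[
\mathrm{Re}\!\left(\frac{d\zeta}{d\tau}\right)^{-1}\!\bigg|_{\tau=\tau_{k}}=\frac{B^{2}-2C+2\omega_{0}^{2}}{B^{2}\omega_{0}^{2}+(C-\omega_{0}^{2})^{2}}-\frac{D^{2}}{E^{2}+D^{2}\omega_{0}^{2}}.
\]

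The key algebraic observation, which I expect to be the one non-routine step, is that the two denominators coincide. Indeed, equation \eqref{omega34} gives $\omega_{0}^{4}=(2C+D^{2}-B^{2})\omega_{0}^{2}-C^{2}+E^{2}$, and substituting this into $B^{2}\omega_{0}^{2}+(C-\omega_{0}^{2})^{2}$ collapses it exactly to $D^{2}\omega_{0}^{2}+E^{2}$. With a common denominator, the numerator becomes $B^{2}-2C-D^{2}+2\omega_{0}^{2}$, and from the explicit formula $2\omega_{0}^{2}=-(B^{2}-2C-D^{2})+\sqrt{(B^{2}-2C-D^{2})^{2}-4(C^{2}-E^{2})}$ this reduces cleanly to $\sqrt{(B^{2}-2C-D^{2})^{2}-4(C^{2}-E^{2})}$.

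Finally, Assumption \ref{assumption2} gives $\Lambda-\mu>0$ and $C-E<0$, while the form of $B,C,D,E$ in \eqref{BCDE} yields $C+E>0$; hence $C^{2}-E^{2}<0$ and the discriminant under the square root is strictly positive. Therefore $\mathrm{Re}(d\zeta/d\tau)^{-1}|_{\tau=\tau_{k}}>0$, which forces $\alpha'(\tau_{k})=\mathrm{Re}(d\zeta/d\tau)|_{\tau=\tau_{k}}>0$, proving the transversality condition.
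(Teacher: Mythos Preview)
Your proof is correct and follows essentially the same approach as the paper: compute $(d\zeta/d\tau)^{-1}$, eliminate $e^{-\tau\zeta}$ via the characteristic relation, take the real part at $\zeta=i\omega_{0}$, and use the explicit formula for $\omega_{0}^{2}$ to conclude positivity. You spell out one step the paper leaves implicit---namely, that the two denominators $B^{2}\omega_{0}^{2}+(C-\omega_{0}^{2})^{2}$ and $D^{2}\omega_{0}^{2}+E^{2}$ coincide by virtue of \eqref{omega34}---and you carry the numerator simplification one step further to the explicit square-root form, but the argument is the same.
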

\begin{proof}
For convenience, we study $\frac{d\tau}{d\zeta}$ instead of $\frac{d\zeta}{d\tau}$. From the expression of $g(\zeta)=0$, we have
\begin{equation*}
  \begin{array}{cll}
    \frac{\mbox{d}\tau}{\mbox{d}\zeta}\Big|_{\zeta=i\omega_{0}} & = & \frac{2\zeta+B+De^{-\tau\zeta}-\tau(\zeta D+E)e^{-\tau\zeta}}{\zeta(\zeta D+E)e^{-\tau\zeta}}\Big|_{\zeta=i\omega_{0}} \\
     & = & \left(-\frac{2\zeta+B}{\zeta(\zeta^{2}+B\zeta+C)}+\frac{D}{\zeta(\zeta D+E)}-\frac{\tau}{\zeta}\right)\Big|_{\zeta=i\omega_{0}} \\
     & = &-\frac{i2\omega_{0}+B}{i\omega_{0}(-\omega_{0}^{2}+iB\omega_{0}+C)}+\frac{D}{i\omega_{0}(i\omega_{0} D+E)}-\frac{\tau}{i\omega_{0}}\\
     & = &\frac{1}{\omega_{0}}\frac{i2\omega_{0}+B}{B\omega_{0}-i(C-\omega_{0}^{2})}+\frac{1}{\omega_{0}}\frac{-D}{D\omega_{0}-iE}-\frac{\tau}{i\omega_{0}}\\
     & = & \frac{1}{\omega_{0}}\frac{(i2\omega_{0}+B)[B\omega_{0}+i(C-\omega_{0}^{2})]}{(B\omega_{0})^{2}+(C-\omega_{0}^{2})^{2}}
     +\frac{1}{\omega_{0}}\frac{-D(D\omega_{0}+iE)}{(D\omega_{0})^{2}+E^{2}}+\frac{i\tau}{\omega_{0}}.\\
  \end{array}
\end{equation*}
Therefore, we have
\begin{equation*}
     \begin{array}{ccl}
       $\mbox{Re}$\left(\frac{\mbox{d}\tau}{\mbox{d}\zeta}\Big|_{\zeta=i\omega_{0}} \right) & = & \frac{2\omega_{0}^{2}+(B^{2}-2C)}{B^{2}\omega_{0}^{2}+(C-\omega_{0}^{2})^{2}}-\frac{D^{2}}{D^{2}\omega_{0}^{2}+E^{2}} \\
        & = & \frac{2\omega_{0}^{2}+B^{2}-2C-D^{2}}{D^{2}\omega_{0}^{2}+E^{2}}. \\
     \end{array}
\end{equation*}
Since
\begin{equation*}
  \omega_{0}^{2}=\frac{-(B^{2}-2C-D^{2})+\sqrt{(B^{2}-2C-D^{2})^{2}-4(C^{2}-E^{2})}}{2},
\end{equation*}
we can yield
\begin{equation*}
  \begin{array}{ccl}
    \mbox{sign}\left(\frac{\mbox{d}\rm{Re}(\zeta)}{\mbox{d}\tau}\Big|_{\tau=\tau_{k}}\right) & = & \mbox{sign}\left(\mbox{Re}\left(\frac{\mbox{d}\tau}{\mbox{d}\zeta}\Big|_{\zeta=i\omega_{0}}\right)\right) \\
     & = & \mbox{sign}\left(\frac{2\omega_{0}^{2}+B^{2}-2C-D^{2}}{\omega_{0}^{2}D^{2}+E^{2}}\right)>0. \\
  \end{array}
\end{equation*}
\end{proof}
\noindent
Summarizing the above results, we obtain the following theorem.
\begin{theorem}\label{HopfBifurcation}
Let Assumption \ref{assumption1} and \ref{assumption2} be satisfied. Then there exist $\tau_{k}>0 (k=0,1,2,\cdots)$($\tau_{k}$ is defined in (\ref{tauk})), such that when $\tau=\tau_{k}$, the SIS model (\ref{system}) undergoes a Hopf bifurcation at the equilibrium $(\overline{I}(a),\overline{S})$. In particular, a non-trivial periodic solution bifurcates from the equilibrium $(\overline{I}(a),\overline{S})$ when $\tau=\tau_{k}$.
\end{theorem}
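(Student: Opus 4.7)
The plan is to invoke the Hopf bifurcation theorem for non-densely defined Cauchy problems developed by Liu, Magal and Ruan \cite{LiuZhihuaMagalPierreRuanShigui-JDE-2014} directly to the abstract formulation (\ref{nonddaCp}), viewing the rescaled age $\tau$ as the bifurcation parameter. The strategy is simply to collect the pieces already established in Sections~2--3 and verify that they cover every hypothesis of that theorem.

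First I would check the structural (parameter-independent) hypotheses on the linear part. By Lemma~2.3, $A_{\tau}$ is a Hille--Yosida operator with $\omega_{0,\mathrm{ess}}(A_{0})<0$, so that the perturbed part $(B_{\tau})_{0}=(A_{\tau}+\tau DH(\overline{v}_{\tau}))_{0}$ also has negative essential growth bound by the compact-perturbation argument of Ducrot--Liu--Magal \cite{DucrotLiuMagal-JMAA-2008}. This gives the spectral decomposition needed for the center manifold reduction. Next I would record that the nonlinearity $\mathcal H$ in (\ref{fracdytdt}) is of class $C^{k}$ (for any $k\geq 4$, needed for the normal form computation in Section~4) on a neighborhood of the origin in $X_{0}$: the only nonlinear ingredient is the rational term $\frac{S\int i\,da}{1+\int \beta(a) i\,da}$, which is smooth on the open set where the denominator is bounded away from zero, and $\mathcal H(0)=0$, $D\mathcal H(0)=0$ by construction.

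For the spectral hypotheses at $\tau=\tau_{k}$, I would piece together Lemma~2.4, which identifies $\sigma(B_{\tau})\cap\Omega$ with the zero set of $\det\Delta(\lambda)$, together with the characteristic computation culminating in (\ref{characteristicequationg}) and the two lemmas just proved. The lemma on $\frac{dg}{d\zeta}\big|_{i\omega_{0}}\neq 0$ says that $\lambda=i\tau_{k}\omega_{0}$ is a simple root of $\det\Delta(\lambda)=0$, hence a simple eigenvalue of $B_{\tau_{k}}$, and the transversality lemma supplies $\frac{d\,\mathrm{Re}\,\zeta}{d\tau}\big|_{\tau_{k}}>0$, which under the scaling $\lambda=\tau\zeta$ yields $\frac{d\,\mathrm{Re}\,\lambda}{d\tau}\big|_{\tau_{k}}>0$ as required. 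The one item not handled in an earlier lemma is the non-resonance condition: $in\tau_{k}\omega_{0}$ must fail to be an eigenvalue of $B_{\tau_{k}}$ for every integer $n\neq\pm 1$. This I would verify by substituting $\zeta=in\omega_{0}$ into $g(\zeta)$ and observing, from the real/imaginary split (\ref{realimaginary}), that the equations $(n^{2}\omega_{0}^{2}+C)^{2}+(nB\omega_{0})^{2}=(nD\omega_{0})^{2}+E^{2}$ force $n^{2}=1$ under the sign conditions of Assumption \ref{assumption2}; this is the only delicate algebraic step, and is the main place I expect to spend effort.

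With all hypotheses verified, the Hopf bifurcation theorem of \cite{LiuZhihuaMagalPierreRuanShigui-JDE-2014} applies and yields a branch of non-trivial periodic solutions of (\ref{nonddaCp}) bifurcating from $\overline{v}_{\tau_{k}}$ at $\tau=\tau_{k}$, with period close to $2\pi/(\tau_{k}\omega_{0})$. Undoing the change of variables $w=v-\overline{v}_{\tau}$ and the time/age rescaling of Section~2.1 translates this into a non-trivial periodic solution of the original SIS model (\ref{system}) bifurcating from the positive equilibrium $(\overline{I}(a),\overline{S})$, which is the statement of the theorem. The hardest part, as noted, is the non-resonance verification; everything else is a bookkeeping assembly of the preceding lemmas.
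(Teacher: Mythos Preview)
Your plan is correct and matches the paper's approach: the paper states Theorem~\ref{HopfBifurcation} as a direct summary of the preceding lemmas, implicitly invoking the abstract Hopf bifurcation theorem for non-densely defined Cauchy problems once the Hille--Yosida estimate, simplicity of $\pm i\omega_{0}$, and transversality are in hand. Two minor corrections: the Hopf theorem you want is in \cite{LiuMagalRuan-ZAMP-2011} (or \cite{PierreMagalShiguiRuan-MAMS-2009}), not \cite{LiuZhihuaMagalPierreRuanShigui-JDE-2014}, and in your non-resonance identity the sign should read $(n^{2}\omega_{0}^{2}-C)^{2}$; with that fix, uniqueness of the positive root of (\ref{siama2}) under $C-E<0$ indeed forces $n^{2}=1$, and $g(0)=C+E>0$ rules out $n=0$.
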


\section{Direction and Stability of Hopf Bifurcation}

\noindent

In this section, we study the direction and stability of the Hopf bifurcation by applying the normal form theory developed in Liu et al. \cite{LiuZhihuaMagalPierreRuanShigui-JDE-2014} to the Cauchy problem (\ref{nonddaCp}).

\subsection{Conversion of Bifurcation Parameter}

\noindent

From now on, we include the parameter $\tau$ into the state variable. Consider the system
\begin{equation}\label{systmeincludetau}
  \left\{
    \begin{array}{l}
      \frac{d\tau(t)}{dt}=0, \\
      \frac{dv(t)}{dt}=A_{\tau(t)}v(t)+\tau(t) H(v(t)), \\
      \tau(0)=\tau_{0}\in \mathbb{R}, v(0)=v_{0}\in X_{0}. \\
    \end{array}
  \right.
\end{equation}
Making a change of variables
\begin{equation*}
  v(t)=\hat{v}(t)+\overline{v}_{\tau},
\end{equation*}
we obtain the system
\begin{equation*}
  \left\{
     \begin{array}{ccl}
       \frac{d\tau(t)}{dt} & = & 0, \\
       \frac{d\hat{v}(t)}{dt} & = & A_{\tau(t)}\hat{v}(t)+\tau(t) H\left(\hat{v}(t)+\overline{v}_{\tau}\right)-\tau(t) H(\bar{v}_{\tau}). \\
     \end{array}
   \right.
\end{equation*}
Setting
\begin{equation*}
  \tau=\hat{\tau}+\tau_{k},
\end{equation*}
we have
\begin{equation}\label{dhattaudhatx}
  \left\{
     \begin{array}{ccl}
       \frac{d\hat{\tau}(t)}{dt} & = & 0, \\
       \frac{d\hat{v}(t)}{dt} & = & \hat{A}\hat{v}(t)+\hat{H}(\hat{\tau},\hat{v}), \\
     \end{array}
   \right.
\end{equation}
where
\begin{equation*}
  \hat{A}:=A_{\hat{\tau}(t)+\tau_{k}}
\end{equation*}
and
\begin{equation*}
  \hat{H}(\hat{\tau},\hat{v}):=(\hat{\tau}+\tau_{k})[H(\hat{v}(t)+\overline{v}_{\hat{\tau}+\tau_{k}})-H(\overline{v}_{\hat{\tau}+\tau_{k}})].
\end{equation*}
Consequently, we can get
\begin{equation*}
     \begin{array}{ccl}
       \partial_{\hat{\tau}}\hat{H}(\hat{\tau},\hat{v})(\tilde{\tau}) & = & \tilde{\tau}\bigg\{ H(\hat{v}+\overline{v}_{\hat{\tau}+\tau_{k}})-H(\overline{v}_{\hat{\tau}+\tau_{k}}) \\
       &  & +(\hat{\tau}+\tau_{k})\bigg[DH(\hat{v}+\overline{v}_{\hat{\tau}+\tau_{k}})\left(\frac{d\overline{v}_{\hat{\tau}+\tau_{k}}}{d\hat{\tau}}\right) \\
        &  & -DH(\overline{v}_{\hat{\tau}+\tau_{k}})\left(\frac{d\overline{v}_{\hat{\tau}+\tau_{k}}}{d\hat{\tau}}\right)\bigg]\bigg\} \\
     \end{array}
\end{equation*}
and
\begin{equation*}
  \partial_{\hat{v}}\hat{H}(\hat{\tau},\hat{v})(\tilde{v})=(\hat{\tau}+\tau_{k})DH(\hat{v}+\overline{v}_{\hat{\tau}+\tau_{k}})(\tilde{v}).
\end{equation*}
Hence
\begin{equation*}
     \begin{array}{ccc}
       \partial_{\hat{\tau}}\hat{H}(0,0)(\tilde{\tau})=0 & \mbox{ and } & \partial_{\hat{v}}\hat{H}(0,0)(\tilde{v})=\tau_{k}DH(\overline{v}_{\tau_{k}})(\tilde{v}). \\
     \end{array}
\end{equation*}
Set
\begin{equation*}
     \begin{array}{ccc}
       \mathcal{X}=\mathbb{R}\times X &\mbox{and}  & \mathcal{X}_{0}=\mathbb{R}\times\overline{D(A)}. \\
     \end{array}
\end{equation*}
Consider the linear operator $\mathcal{A}: D(\mathcal{A})\subset\mathcal{X}\rightarrow\mathcal{X}$ defined by
\begin{equation*}
  \mathcal{A}\left(
               \begin{array}{c}
                 \hat{\tau} \\
                 \hat{v} \\
               \end{array}
             \right)=
             \left(
               \begin{array}{c}
                 0 \\
                 (A_{\tau_{k}}+\tau_{k}DH(\overline{v}_{\tau_{k}}))\hat{v} \\
               \end{array}
             \right)=
             \left(
               \begin{array}{c}
                 0 \\
                 B_{\tau_{k}}\hat{v} \\
               \end{array}
             \right)
\end{equation*}
with
\begin{equation*}
  D(\mathcal{A})=\mathbb{R}\times D(A),
\end{equation*}
and the map $F: \overline{D(\mathcal{A})}\rightarrow\mathcal{X}$ defined by
\begin{equation*}
  F\left(
               \begin{array}{c}
                 \hat{\tau} \\
                 \hat{v} \\
               \end{array}
             \right)=\left(
                       \begin{array}{c}
                         0 \\
                         W\left(
               \begin{array}{c}
                 \hat{\tau} \\
                 \hat{v} \\
               \end{array}
             \right) \\
                       \end{array}
                     \right),
\end{equation*}
where $W: \overline{D(\mathcal{A})}\rightarrow X$ is defined by
\begin{equation*}
  W\left(
               \begin{array}{c}
                 \hat{\tau} \\
                 \hat{v} \\
               \end{array}
             \right):=(\hat{\tau}+\tau_{k})[H(\hat{v}+\overline{v}_{\hat{\tau}+\tau_{k}})-H(\overline{v}_{\hat{\tau}+\tau_{k}})]-\tau_{k}DH(\overline{v}_{\tau_{k}})(\hat{v})-\hat{\tau}G(\hat{v}),
\end{equation*}
where
\begin{equation*}
  G\left(
     \begin{array}{c}
       0_{\mathbb{R}^{2}} \\
       \varphi \\
     \end{array}
   \right)
  =\left(
               \begin{array}{c}
                 0_{\mathbb{R}^{2}} \\
                 Q\varphi \\
               \end{array}
             \right) \mbox{ and }
             \left(
               \begin{array}{c}
                 0_{\mathbb{R}^{2}} \\
                 \varphi \\
               \end{array}
             \right)=\hat{v}.
\end{equation*}
Then we can yield
\begin{equation*}
     \begin{array}{ccc}
       F\left(
          \begin{array}{c}
            0 \\
            0 \\
          \end{array}
        \right)=0
        & \mbox{ and } & DF\left(
                             \begin{array}{c}
                               0 \\
                               0 \\
                             \end{array}
                           \right)=0.\\
     \end{array}
\end{equation*}
Let $w(t)=\left(
            \begin{array}{c}
              \hat{\tau}(t) \\
              \hat{v}(t) \\
            \end{array}
          \right)
$. Now we can reformulate system (\ref{dhattaudhatx}) as the following system
\begin{equation}\label{dwtdt}
  \frac{dw(t)}{dt}=\mathcal{A}w(t)+F(w(t)), w(0)=w_{0}\in\overline{D(\mathcal{A})}.
\end{equation}

\subsection{Spectral Decomposition}
\noindent

Now we compute the projectors on the generalized eigenspace associated to eigenvalues $\lambda_{0}= i\omega_{k}$ or $-i\omega_{k}$ of $B_{\tau_{k}}$. In terms of the above discussion, we deduced that $\lambda_{0}$ is a pole of $(\lambda I-B_{\tau_{k}})^{-1}$ of finite order $1$. This means that $\lambda_{0}$ is isolated in $\sigma(B_{\tau_{k}})\cap\Omega$, and the Laurent's expansion of the resolvent around $\lambda_{0}$ takes the following form
\begin{equation*}
  (\lambda I-B_{\tau_{k}})^{-1}=\sum_{n=-1}^{+\infty}{(\lambda-\lambda_{0})^{n}B_{n,\lambda_{0}}^{B_{\tau_{k}}}}.
\end{equation*}
The bounded linear operator $B_{-1,\lambda_{0}}^{B_{\tau_{k}}}$ is the projector on the generalized eigenspace of $B_{\tau_{k}}$ associated to $\lambda_{0}$. We remark that
\begin{equation*}
 (\lambda-\lambda_{0})(\lambda I-B_{\tau_{k}})^{-1}=\sum_{m=0}^{+\infty}{(\lambda-\lambda_{0})^{m}B_{m-1,\lambda_{0}}^{B_{\tau_{k}}}}.
\end{equation*}
Therefore, we have the following formula
\begin{equation*}
  B_{-1,\lambda_{0}}^{B_{\tau_{k}}}=\lim\limits_{\lambda\to \lambda_{0}} {(\lambda-\lambda_{0})(\lambda I-B_{\tau_{k}})^{-1}}.
\end{equation*}

\begin{lemma}
  Let Assumption \ref{assumption1} and \ref{assumption2} be satisfied. Then $\lambda_{0}$ is a pole of $(\lambda I-B_{\tau_{k}})^{-1}$ of order $1$, and the projector on the generalized eigenspace of $B_{\tau_{k}}$ associated to the eigenvalue $\lambda_{0}$ is given by
\begin{equation*}
B_{-1,\lambda_{0}}^{B_{\tau_{k}}}\left(
                             \begin{array}{c}
                               \delta \\
                               \varphi \\
                             \end{array}
                           \right)=\left(
                \begin{array}{c}
                  0_{\mathbb{R}^{2}} \\
                  \psi_{\lambda_{0}}\\
                \end{array}
              \right),\\
\end{equation*}
where
\begin{equation}\label{psia}
     \begin{array}{ccl}
       \psi_{\lambda_{0}}(a) & = &\left(\frac{d}{d\lambda}|\Delta(\lambda_{0})|\right)^{-1}e^{-(\lambda_{0} I+\tau_{k} Q)a}\left(\Delta(\lambda_{0})\right)^{*} \\
       & & \times \left[\delta+\tau_{k} DB(\overline{u}_{\tau_{k}})\left(\int_{0}^{a}{e^{-\int_{s}^{a}{(\lambda_{0} I+\tau_{k} Q)dl}}\varphi(s)}ds\right)\right] \\
     \end{array}
\end{equation}
with
\begin{equation}\label{Deltalambdaxing}
\left(\Delta(\lambda)\right)^{*}=\left(
                                                    \begin{array}{cc}
                                                      1+\frac{\xi\tau}{(\lambda+\mu\tau)(\xi+1)} & \frac{\xi\tau}{(\lambda+\mu\tau)(\xi+1)} \\
                                                      \frac{\tau(\eta-1)}{\lambda+\tau}+\frac{\xi\tau e^{-\lambda}}{(\lambda+\tau)(\xi+1)} & 1-\frac{\tau}{\lambda+\tau}+\frac{\xi\tau e^{-\lambda}}{(\lambda+\tau)(\xi+1)} \\
                                                    \end{array}
                                                  \right).
\end{equation}
\end{lemma}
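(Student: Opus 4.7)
The plan is to obtain the projector as the residue of the resolvent at the simple pole $\lambda_{0}$, and compute it from the explicit resolvent formula in Lemma 2.3 together with the $2\times 2$ adjugate representation of $\Delta(\lambda)^{-1}$. First I would justify that $\lambda_{0}$ is a simple pole of $(\lambda I-B_{\tau_{k}})^{-1}$. By Lemma 2.3(i), the points of $\sigma(B_{\tau_{k}})\cap\Omega$ are exactly the zeros of $\det(\Delta(\lambda))$; by the preceding lemma, $\frac{d}{d\zeta}g(\zeta)|_{\zeta=i\omega_{0}}\neq 0$, and via the identity $\det(\Delta(\lambda))=\tau_{k}^{2}g(\lambda/\tau_{k})/\tilde g(\lambda)$ at $\lambda=\tau_{k}i\omega_{0}$, this translates into $\frac{d}{d\lambda}\det(\Delta(\lambda_{0}))\neq 0$. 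Hence $\det(\Delta(\lambda))$ has a simple zero at $\lambda_{0}$, so $\Delta(\lambda)^{-1}$ has a simple pole there, and by the resolvent formula in Lemma 2.3 this is the only source of singularity.

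Next I would express $\Delta(\lambda)^{-1}$ through the cofactor matrix. For a $2\times 2$ matrix, $\Delta(\lambda)^{-1}=\det(\Delta(\lambda))^{-1}\,(\Delta(\lambda))^{*}$, where $(\Delta(\lambda))^{*}$ is obtained by swapping the diagonal entries of $\Delta(\lambda)$ and negating the off-diagonal ones. I would carry out this transcription from the explicit form of $\Delta(\lambda)$ in (\ref{characteristicequation}), using (\ref{intea}) and (\ref{intbetaea}) to keep the entries tidy, and verify that the result coincides with the matrix displayed in (\ref{Deltalambdaxing}). This is a routine but careful computation.

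The third step is to apply the residue identity $B_{-1,\lambda_{0}}^{B_{\tau_{k}}}=\lim_{\lambda\to\lambda_{0}}(\lambda-\lambda_{0})(\lambda I-B_{\tau_{k}})^{-1}$. Plugging in the resolvent formula (\ref{lambdalambdaI}), I would note that the semigroup kernel $e^{-\int_{0}^{a}(\lambda I+\tau_{k}Q)dl}$, the operator $K(\lambda,\varphi)$ defined in (\ref{Klambdapsi}), and the additional integral $\int_{0}^{a}e^{-\int_{s}^{a}(\lambda I+\tau_{k}Q)dl}\varphi(s)\,ds$ all depend holomorphically on $\lambda$ on $\Omega$, so they remain bounded as $\lambda\to\lambda_{0}$. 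Consequently the $(\lambda-\lambda_{0})$ factor kills the additive $\int_{0}^{a}\cdots\varphi(s)\,ds$ term entirely, and acts only on the $\Delta(\lambda)^{-1}$ factor inside the first summand. Using
\[
\lim_{\lambda\to\lambda_{0}}\frac{\lambda-\lambda_{0}}{\det(\Delta(\lambda))}=\left(\frac{d}{d\lambda}|\Delta(\lambda_{0})|\right)^{-1},
\]
and evaluating the remaining analytic factors at $\lambda_{0}$, one reads off exactly the formula for $\psi_{\lambda_{0}}$ claimed in (\ref{psia}).

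The main obstacle I anticipate is purely bookkeeping: verifying (\ref{Deltalambdaxing}) against (\ref{Deltalambda}) by hand, and being careful that $K(\lambda,\varphi)$ is correctly folded into the bracket $[\delta+K(\lambda_{0},\varphi)]$ in the final expression rather than being treated as a singular contribution. There is no conceptual difficulty beyond the simple-pole residue computation, because Lemma 2.3 already provides a closed-form resolvent and the earlier non-degeneracy lemma guarantees simplicity of the zero of $\det(\Delta(\lambda))$.
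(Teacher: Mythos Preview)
Your proposal is correct and follows essentially the same approach as the paper's proof: both use the resolvent formula from Lemma~2.3, write $\Delta(\lambda)^{-1}=\det(\Delta(\lambda))^{-1}(\Delta(\lambda))^{*}$ via the adjugate, observe that the analytic integral term is killed by the factor $(\lambda-\lambda_{0})$, and evaluate the residue via $\lim_{\lambda\to\lambda_{0}}(\lambda-\lambda_{0})/\det(\Delta(\lambda))=(\tfrac{d}{d\lambda}|\Delta(\lambda_{0})|)^{-1}$. If anything, your justification of simplicity of the pole through the relation $\det(\Delta(\lambda))=\tau_{k}^{2}g(\lambda/\tau_{k})/\tilde g(\lambda)$ and the earlier non-degeneracy lemma is slightly more explicit than the paper, which relies on the preceding discussion.
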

\begin{proof}
Due to
\begin{equation*}
\begin{array}{l}
       (\lambda-\lambda_{0})(\lambda I -B_{\tau_{k}})^{-1}\left(
                                \begin{array}{c}
                                  \delta \\
                                  \varphi \\
                                \end{array}
                              \right)
                              =\left(
                                 \begin{array}{c}
                                   0_{\mathbb{R}^{2}} \\
                                   \psi_{\lambda_{0}} \\
                                 \end{array}
                               \right)\Longleftrightarrow \\
        \psi(a)= (\lambda-\lambda_{0})e^{-\int_{0}^{a}{(\lambda I+\tau_{k} Q)dl}}(\Delta(\lambda))^{-1}\left[\delta+K(\lambda,\varphi)\right]+(\lambda-\lambda_{0})\int_{0}^{a}
    {e^{-\int_{s}^{a}{(\lambda I+\tau_{k} Q)dl}}}\varphi(s)ds, \\
     \end{array}
\end{equation*}
where $\Delta(\lambda)$ and $K(\lambda,\varphi)$ are defined in (\ref{Deltalambda}) and (\ref{Klambdapsi}). We first have $(\Delta(\lambda))^{-1}=\frac{\left(\Delta(\lambda)\right)^{*}}{|\Delta(\lambda)|}$, where $\left(\Delta(\lambda)\right)^{*}$ is given by (\ref{Deltalambdaxing}).
Using $f(\lambda_{0})=0$, we find
\begin{equation*}
  (\lambda_{0}+\mu\tau_{k})\tau_{k}\xi e^{-\lambda_{0}}=-(\xi+1)\lambda_{0}^{2}-\tau_{k}(\mu\xi+\mu+\xi)\lambda_{0}+\xi\tau_{k}^{2}(\eta-1).
\end{equation*}
Furthermore, we obtain
\begin{equation*}

             \right)
              \\
           \end{array}
         \right),
\end{equation}
where
\begin{equation*}
  \psi_{h 341}=(1-\psi_{-i\omega 34})b_{3}-\psi_{i\omega 34}b_{1} \\
\end{equation*}
and
\begin{equation*}
  \psi_{h 342}=(\mu-\psi_{-i\omega 34})b_{4}-\psi_{i\omega 34}b_{2}.\\
\end{equation*}
\begin{lemma}
Let Assumption \ref{assumption1} be satisfied. Then
\begin{equation*}
  \sigma(\mathcal{A})=\sigma(B_{\tau_{k}})\cup \{0\}.
\end{equation*}
Moreover, for $\lambda\in\rho(\mathcal{A})\cap\Omega=\Omega\setminus(\sigma(B_{\tau_{k}})\cup \{0\})$, we have
\begin{equation*}
  (\lambda-\mathcal{A})^{-1}\left(

                                                 \right)
     \right)\\
     & = & \tau_{k}D^{2}H(\overline{v}_{\tau_{k}})(v_{1},v_{2})+\hat{\tau}_{2}DH(\overline{v}_{\tau_{k}})(v_{1})+ \hat{\tau}_{1}DH(\overline{v}_{\tau_{k}})(v_{2})\\
     &  & +\hat{\tau}_{2}\tau_{k}D^{2}H(\overline{v}_{\tau_{k}})(v_{1},\frac{d\overline{v}_{\hat{\tau}+\tau_{k}}}{d\hat{\tau}}\big|_{\hat{\tau}=0}) +\hat{\tau}_{1}\tau_{k}D^{2}H(\overline{v}_{\tau_{k}})(v_{2},\frac{d\overline{v}_{\hat{\tau}+\tau_{k}}}{d\hat{\tau}}\big|_{\hat{\tau}=0})\\ &&-\hat{\tau}_{1}DG(0)(v_{2})- \hat{\tau}_{2}DG(0)(v_{1}),\\
  \end{array}
\end{equation}
where
\begin{equation*}
  DG(0)\left(

                             \right)
         \right) \\
        & = & \hat{\tau}_{1}D^{2}H(\overline{v}_{\tau_{k}})(v_{2},v_{3})+\hat{\tau}_{2}D^{2}H(\overline{v}_{\tau_{k}})(v_{1},v_{3})
        +\hat{\tau}_{3}D^{2}H(\overline{v}_{\tau_{k}})(v_{1},v_{2}) \\
        &&+2\hat{\tau}_{2}\hat{\tau}_{3}D^{2}H(\overline{v}_{\tau_{k}})\left(v_{1},\frac{d\overline{v}_{\hat{\tau}+\tau_{k}}}{d\hat{\tau}}\big|_{\hat{\tau}=0} \right)
        +2\hat{\tau}_{1}\hat{\tau}_{3}D^{2}H(\overline{v}_{\tau_{k}})\left(v_{2},\frac{d\overline{v}_{\hat{\tau}+\tau_{k}}}{d\hat{\tau}}\big|_{\hat{\tau}=0} \right)\\
        &&+2\hat{\tau}_{1}\hat{\tau}_{2}D^{2}H(\overline{v}_{\tau_{k}})\left(v_{3},\frac{d\overline{v}_{\hat{\tau}+\tau_{k}}}{d\hat{\tau}}\big|_{\hat{\tau}=0} \right)\\
        &&+\hat{\tau}_{2}\hat{\tau}_{3}\tau_{k}D^{2}H(\overline{v}_{\tau_{k}})\left(v_{1},\frac{d^{2}\overline{v}_{\hat{\tau}+\tau_{k}}}{d(\hat{\tau})^{2}}\big|_{\hat{\tau}=0} \right)
        +\hat{\tau}_{1}\hat{\tau}_{3}\tau_{k}D^{2}H(\overline{v}_{\tau_{k}})\left(v_{2},\frac{d^{2}\overline{v}_{\hat{\tau}+\tau_{k}}}{d(\hat{\tau})^{2}}\big|_{\hat{\tau}=0} \right)\\
        &&+\hat{\tau}_{1}\hat{\tau}_{2}\tau_{k}D^{2}H(\overline{v}_{\tau_{k}})\left(v_{3},\frac{d^{2}\overline{v}_{\hat{\tau}+\tau_{k}}}{d(\hat{\tau})^{2}}\big|_{\hat{\tau}=0} \right)\\
        &&+\tau_{k}D^{3}H(\overline{v}_{\tau_{k}})(v_{1},v_{2},v_{3})\\
        &&+\hat{\tau}_{3}\tau_{k}D^{3}H(\overline{v}_{\tau_{k}})\left(v_{1},v_{2},\frac{d\overline{v}_{\hat{\tau}+\tau_{k}}}{d\hat{\tau}}\big|_{\hat{\tau}=0}\right)
        +\hat{\tau}_{2}\tau_{k}D^{3}H(\overline{v}_{\tau_{k}})\left(v_{1},v_{3},\frac{d\overline{v}_{\hat{\tau}+\tau_{k}}}{d\hat{\tau}}\big|_{\hat{\tau}=0}\right)\\
        &&+\hat{\tau}_{1}\tau_{k}D^{3}H(\overline{v}_{\tau_{k}})\left(v_{2},v_{3},\frac{d\overline{v}_{\hat{\tau}+\tau_{k}}}{d\hat{\tau}}\big|_{\hat{\tau}=0}\right)\\
        &&+\hat{\tau}_{2}\hat{\tau}_{3}\tau_{k}D^{3}H(\overline{v}_{\tau_{k}})\left(v_{1},\frac{d\overline{v}_{\hat{\tau}+\tau_{k}}}{d\hat{\tau}}\big|_{\hat{\tau}=0},\frac{d\overline{v}_{\hat{\tau}+\tau_{k}}}{d\hat{\tau}}\big|_{\hat{\tau}=0}\right)\\
        &&+\hat{\tau}_{1}\hat{\tau}_{3}\tau_{k}D^{3}H(\overline{v}_{\tau_{k}})\left(v_{2},\frac{d\overline{v}_{\hat{\tau}+\tau_{k}}}{d\hat{\tau}}\big|_{\hat{\tau}=0},\frac{d\overline{v}_{\hat{\tau}+\tau_{k}}}{d\hat{\tau}}\big|_{\hat{\tau}=0}\right)\\
        &&+\hat{\tau}_{1}\hat{\tau}_{2}\tau_{k}D^{3}H(\overline{v}_{\tau_{k}})\left(v_{3},\frac{d\overline{v}_{\hat{\tau}+\tau_{k}}}{d\hat{\tau}}\big|_{\hat{\tau}=0},\frac{d\overline{v}_{\hat{\tau}+\tau_{k}}}{d\hat{\tau}}\big|_{\hat{\tau}=0}\right),\\
     \end{array}
\end{equation*}
where
\begin{equation*}
    \begin{array}{cc}
      D^{3}H(\overline{v}_{\tau_{k}})\left(\left(
                                        \begin{array}{c}
                                          0_{\mathbb{R}^{2}} \\
                                          \left(
                                            \begin{array}{c}
                                              \varphi_{1}^{1} \\
                                              \varphi_{1}^{2} \\
                                            \end{array}
                                          \right)
                                           \\
                                        \end{array}
                                      \right)
      ,\left(
         \begin{array}{c}
           0_{\mathbb{R}^{2}} \\
           \left(
             \begin{array}{c}
               \varphi_{2}^{1} \\
               \varphi_{2}^{2} \\
             \end{array}
           \right)
            \\
         \end{array}
       \right),\left(
         \begin{array}{c}
           0_{\mathbb{R}^{2}} \\
           \left(
             \begin{array}{c}
               \varphi_{3}^{1} \\
               \varphi_{3}^{2} \\
             \end{array}
           \right)
            \\
         \end{array}
       \right)\right)=\left(
                                \begin{array}{c}
                                 \left(
                                    \begin{array}{c}
                                      \psi_{D3H1} \\
                                      \psi_{D3H2} \\
                                    \end{array}
                                  \right)
                                   \\
                                  0_{L^{1}} \\
                                \end{array}
                              \right)
    \end{array}
\end{equation*}
with
\begin{equation*}
     \begin{array}{ccl}
       \psi_{D3H1} & = & -\frac{1}{(1+\xi)^{2}}  \big\{\int_{0}^{+\infty}{\varphi_{1}^{1}(a)da}\int_{0}^{+\infty}{\varphi_{2}^{2}(a)da}\int_{0}^{+\infty}{\beta(a)\varphi_{3}^{1}(a)da} \\
       &&+\int_{0}^{+\infty}{\varphi_{1}^{1}(a)da}\int_{0}^{+\infty}{\varphi_{3}^{2}(a)da}\int_{0}^{+\infty}{\beta(a)\varphi_{2}^{1}(a)da} \\
        &&+\int_{0}^{+\infty}{\varphi_{1}^{2}(a)da}\int_{0}^{+\infty}{\varphi_{2}^{1}(a)da}\int_{0}^{+\infty}{\beta(a)\varphi_{3}^{1}(a)da}\\
        &&+\int_{0}^{+\infty}{\varphi_{1}^{2}(a)da}\int_{0}^{+\infty}{\varphi_{3}^{1}(a)da}\int_{0}^{+\infty}{\beta(a)\varphi_{2}^{1}(a)da} \\
        &&+\int_{0}^{+\infty}{\varphi_{2}^{1}(a)da}\int_{0}^{+\infty}{\varphi_{3}^{2}(a)da}\int_{0}^{+\infty}{\beta(a)\varphi_{1}^{1}(a)da} \\
        &&+\int_{0}^{+\infty}{\varphi_{2}^{2}(a)da}\int_{0}^{+\infty}{\varphi_{3}^{1}(a)da}\int_{0}^{+\infty}{\beta(a)\varphi_{1}^{1}(a)da}\\
        &&-2\int_{0}^{+\infty}{\varphi_{1}^{1}(a)da}\int_{0}^{+\infty}{\beta(a)\varphi_{2}^{1}(a)da}\int_{0}^{+\infty}{\beta(a)\varphi_{3}^{1}(a)da}\\
        &&-2\int_{0}^{+\infty}{\varphi_{2}^{1}(a)da}\int_{0}^{+\infty}{\beta(a)\varphi_{1}^{1}(a)da}\int_{0}^{+\infty}{\beta(a)\varphi_{3}^{1}(a)da}\\
         &&-2\int_{0}^{+\infty}{\varphi_{3}^{1}(a)da}\int_{0}^{+\infty}{\beta(a)\varphi_{1}^{1}(a)da}\int_{0}^{+\infty}{\beta(a)\varphi_{2}^{1}(a)da}\big\}\\
        &&-\frac{\xi}{(1+\xi)^{3}}\big\{6\int_{0}^{+\infty}{\beta(a)\varphi_{1}^{1}(a)da}\int_{0}^{+\infty}{\beta(a)\varphi_{2}^{1}(a)da}\int_{0}^{+\infty}{\beta(a)\varphi_{3}^{1}(a)da}\\
        &&-2\int_{0}^{+\infty}{\varphi_{1}^{2}(a)da}\int_{0}^{+\infty}{\beta(a)\varphi_{2}^{1}(a)da}\int_{0}^{+\infty}{\beta(a)\varphi_{3}^{1}(a)da}\\
        &&-2\int_{0}^{+\infty}{\varphi_{2}^{2}(a)da}\int_{0}^{+\infty}{\beta(a)\varphi_{1}^{1}(a)da}\int_{0}^{+\infty}{\beta(a)\varphi_{3}^{1}(a)da}\\
        &&-2\int_{0}^{+\infty}{\varphi_{3}^{2}(a)da}\int_{0}^{+\infty}{\beta(a)\varphi_{1}^{1}(a)da}\int_{0}^{+\infty}{\beta(a)\varphi_{2}^{1}(a)da}\big\}
     \end{array}
\end{equation*}
and
\begin{equation*}
\psi_{D3H2}=-\psi_{D3H1}.
\end{equation*}
Therefore, we obtain
\begin{equation*}
  \frac{1}{3!}D^{3}W(0)(w_{c})^{3}=\frac{1}{3!}D^{3}W(0)\left(
                                                                                           \begin{array}{c}
                                                                                             \hat{\tau} \\
                                                                                             \hat{v}_{c} \\
                                                                                           \end{array}
                                                                                         \right)
    ^{3}=
    \left(
      \begin{array}{c}
        \left(
          \begin{array}{c}
            \hat{\psi}_{1} \\
            \hat{\psi}_{2} \\
          \end{array}
        \right)\\
        0_{L^{1}} \\
      \end{array}
    \right)
\end{equation*}
with
\begin{equation*}
     \begin{array}{ccl}
       \hat{\psi}_{1} & = &
       \bigg\{
       \frac{(\hat{\tau})^{2}}{\xi+1}\left(\frac{\xi(\tau_{k}-1)}{\tau_{k}}-\frac{[\Lambda+\xi(\eta-1)](\mu\tau_{k}-1)}{\mu\tau_{k}}\right)
       +\frac{(\hat{\tau})^{2}\xi}{(\xi+1)^{2}}
       \left(\frac{\xi(\tau_{k}-1)^{2}}{\tau_{k}}-\frac{[\Lambda+\xi(\eta-1)](\mu\tau_{k}-1)(\tau_{k}-1)}{\mu\tau_{k}}\right)\\
       &&-\frac{(\hat{\tau})^{2}\big\{\xi(\tau_{k}-2)+[\Lambda+\xi(\eta-1)](2-\mu\tau_{k})\big\}}{2(\xi+1)}
     \bigg\}\int_{0}^{+\infty}{(x_{1}b_{1}+x_{2}b_{3})(a)da}\\

     &&-\frac{(\hat{\tau})^{2}\xi[(\xi-1)\tau_{k}^{2}+4\tau_{k}-2]}{2\tau_{k}(\xi+1)^{3}}
     \int_{0}^{+\infty}{(x_{1}b_{2}+x_{2}b_{4})(a)da}\\

     &&+\frac{\hat{\tau}}{\xi+1}\left(\frac{\xi(\tau_{k}-1)}{\xi+1}+1\right)\int_{0}^{+\infty}
     {(x_{1}b_{1}+x_{2}b_{3})(a)da}\int_{0}^{+\infty}{(x_{1}b_{2}+x_{2}b_{4})(a)da}\\

     &&-\frac{\tau_{k}\xi}{(\xi+1)^{3}}\left(\int_{0}^{+\infty}{\beta(a)(x_{1}b_{1}+x_{2}b_{3})(a)da}\right)^{3}\\

     &&+\bigg\{\frac{\hat{\tau}\xi(2-\tau_{k})}{(\xi+1)^{2}}
     +\frac{\hat{\tau}\xi}{(\xi+1)^{3}}\left(3\xi(\tau_{k}-1)-\frac{[\Lambda+\xi(\eta-1)](\mu\tau_{k}-1)}{\mu}\right)\bigg\}\\
     &&\times\left(\int_{0}^{+\infty}{\beta(a)(x_{1}b_{1}+x_{2}b_{3})(a)da}\right)^{2}\\

     &&+\bigg\{\frac{\hat{\tau}^{2}\xi}{(\xi+1)^{2}}\left(\frac{2\xi(\tau_{k}-1)^{2}}{\tau_{k}}-\frac{[\Lambda+\xi(\eta-1)](\mu\tau_{k}-1)(\tau_{k}-1)}{\mu\tau_{k}}\right)\\
     &&-\frac{\hat{\tau}^{2}\xi^{2}}{(\xi+1)^{3}}\left(\frac{3\xi(\tau_{k}-1)^{2}}{\tau_{k}}-\frac{2[\Lambda+\xi(\eta-1)](\mu\tau_{k}-1)(\tau_{k}-1)}{\mu\tau_{k}}\right)\\
     &&-\frac{\hat{\tau}^{2}\xi}{(\xi+1)^{2}}\left(\frac{2\xi(\tau_{k}-1)}{\tau_{k}}-\frac{[\Lambda+\xi(\eta-1)](\mu\tau_{k}-1)}{\mu\tau_{k}}\right)+\frac{\hat{\tau}^{2}\xi(\tau_{k}-1)}{\tau_{k}(\xi+1)}\\
     &&+\frac{\hat{\tau}^{2}\xi\big\{2\xi(\tau_{k}-2)+[\Lambda+\xi(\eta-1)](2-\mu\tau_{k})\big\}}{2(\xi+1)^{2}}-\frac{\hat{\tau}^{2}\xi(\tau_{k}-2)}{2(\xi+1)}
     \bigg\}\int_{0}^{+\infty}{\beta(a)(x_{1}b_{1}+x_{2}b_{3})(a)da}\\

     &&+\bigg\{\frac{\hat{\tau}}{(\xi+1)^{2}}\left(2\xi(1-\tau_{k})+\frac{[\Lambda+\xi(\eta-1)](\mu\tau_{k}-1)}{\mu}\right)-\frac{\hat{\tau}}{\xi+1}\bigg\}\\
     &&\times\int_{0}^{+\infty}{(x_{1}b_{1}+x_{2}b_{3})(a)da}\int_{0}^{+\infty}{\beta(a)(x_{1}b_{1}+x_{2}b_{3})(a)da}\\
     &&+\left(\frac{\hat{\tau}\xi(1-\xi)(\tau_{k}-1)}{(\xi+1)^{3}}-\frac{\hat{\tau}\xi}{(\xi+1)^{2}}\right)\int_{0}^{+\infty}{(x_{1}b_{2}+x_{2}b_{4})(a)da}\int_{0}^{+\infty}{\beta(a)(x_{1}b_{1}+x_{2}b_{3})(a)da}\\
     &&+\frac{\tau_{k}}{(\xi+1)^{2}}\int_{0}^{+\infty}{(x_{1}b_{1}+x_{2}b_{3})(a)da}\left(\int_{0}^{+\infty}{\beta(a)(x_{1}b_{1}+x_{2}b_{3})(a)da}\right)^{2}\\
     &&+\frac{\xi\tau_{k}}{(\xi+1)^{3}}\int_{0}^{+\infty}{(x_{1}b_{2}+x_{2}b_{4})(a)da}\left(\int_{0}^{+\infty}{\beta(a)(x_{1}b_{1}+x_{2}b_{3})(a)da}\right)^{2}\\
     &&-\frac{\tau_{k}}{(\xi+1)^{2}}\int_{0}^{+\infty}{(x_{1}b_{1}+x_{2}b_{3})(a)da}
     \int_{0}^{+\infty}{(x_{1}b_{2}+x_{2}b_{4})(a)da}\int_{0}^{+\infty}{\beta(a)(x_{1}b_{1}+x_{2}b_{3})(a)da}.\\
     \end{array}
\end{equation*}
and
\begin{equation*}
\hat{\psi}_{2}  = -\hat{\psi}_{1}.
\end{equation*}
Furthermore, we can yield
\begin{equation}\label{13PicD3F0wc3}
     \begin{array}{ccl}
     &&\frac{1}{3!}\Pi_{c}D^{3}F(0)(w_{c})^{3}\\
        & = & \left(
                                                       \begin{array}{c}
                                                         0 \\
                                                         \frac{1}{3!}\hat{\Pi}_{c}D^{3}W(0)(w_{c})^{3} \\
                                                       \end{array}
                                                     \right)\\
        & = & \left(
                                                       \begin{array}{c}
                                                         0_{\mathbb{R}} \\
                                                         \hat{\psi}_{1}\hat{\Pi}_{c}\left(
                                                                                  \begin{array}{c}
                                                                                    \left(
                                                                                      \begin{array}{c}
                                                                                        1 \\
                                                                                        0 \\
                                                                                      \end{array}
                                                                                    \right)
                                                                                     \\
                                                                                    0_{L^{1}} \\
                                                                                  \end{array}
                                                                                \right)
                                                          \\
                                                       \end{array}
                                                     \right)+
                                                     \left(
                                                       \begin{array}{c}
                                                         0_{\mathbb{R}} \\
                                                         \hat{\psi}_{2}\hat{\Pi}_{c}\left(
                                                                                  \begin{array}{c}
                                                                                    \left(
                                                                                      \begin{array}{c}
                                                                                        0 \\
                                                                                        1 \\
                                                                                      \end{array}
                                                                                    \right)
                                                                                     \\
                                                                                    0_{L^{1}} \\
                                                                                  \end{array}
                                                                                \right)
                                                          \\
                                                       \end{array}
                                                     \right)\\
&=&\left(
     \begin{array}{c}
       0_{\mathbb{R}} \\
       \hat{\psi}_{1}\left(
                       \begin{array}{c}
                         0_{\mathbb{R}^{2}} \\
                         \psi_{c10} \\
                       \end{array}
                     \right)
        \\
     \end{array}
   \right)+\left(
     \begin{array}{c}
       0_{\mathbb{R}} \\
       \hat{\psi}_{2}\left(
                       \begin{array}{c}
                         0_{\mathbb{R}^{2}} \\
                         \psi_{c01} \\
                       \end{array}
                     \right)
        \\
     \end{array}
   \right)\\
&=&\left(
     \begin{array}{c}
       0_{\mathbb{R}} \\
       \left(
         \begin{array}{c}
           0_{\mathbb{R}^{2}} \\
           \hat{\psi}_{1}\left(\frac{d}{d\lambda}|\Delta(i\omega_{k})|\right)^{-1}\left(
                              \begin{array}{c}
                                b_{1} \\
                                b_{2} \\
                              \end{array}
                            \right)+
                            \hat{\psi}_{1}\left(\frac{d}{d\lambda}|\Delta(-i\omega_{k})|\right)^{-1}\left(
                              \begin{array}{c}
                                b_{3} \\
                                b_{4} \\
                              \end{array}
                            \right)
            \\
         \end{array}
       \right)
        \\
     \end{array}
   \right).
     \end{array}
\end{equation}

\subsection{Computation of $L_{2}$}
\noindent

In the following we apply the method described in Liu et al. [\cite{LiuZhihuaMagalPierreRuanShigui-JDE-2014}, Theorem 4.2] for $k=2$. The main point is to compute $L_{2}\in \mathcal{L}_{s}(\mathcal{X}_{c}^{2},\mathcal{X}_{h}\cap D(\mathcal{A}))$ by solving the following equation for each $(w_{1},w_{2})\in \mathcal{X}_{c}^{2}$ :
\begin{equation}\label{ddtL2}
\frac{d}{dt}[L_{2}(e^{\mathcal{A}_{c}t}w_{1},e^{\mathcal{A}_{c}t}w_{2})](0)=\mathcal{A}_{h}L_{2}(w_{1},w_{2})+\frac{1}{2!}\Pi_{h}D^{2}F(0)(w_{1},w_{2}).
\end{equation}
Note that
\begin{equation*}
\frac{d}{dt}[L_{2}(e^{\mathcal{A}_{c}t}w_{1},e^{\mathcal{A}_{c}t}w_{2})](0)=L_{2}(\mathcal{A}_{c}w_{1},w_{2})+L_{2}(w_{1},\mathcal{A}_{c}w_{2}).
\end{equation*}
So system (\ref{ddtL2}) can be rewritten as
\begin{equation}\label{L2L2}
L_{2}(\mathcal{A}_{c}w_{1},w_{2})+L_{2}(w_{1},\mathcal{A}_{c}w_{2})= \mathcal{A}_{h}L_{2}(w_{1},w_{2})+\frac{1}{2!}\Pi_{h}D^{2}F(0)(w_{1},w_{2}).
\end{equation}

\noindent \textbf{(i) Computation of $L_{2}(\hat{e}_{2},\hat{e}_{2})$}: Since $\mathcal{A}_{c}\hat{e}_{2}=i\omega_{k}\hat{e}_{2}$, the equation
\begin{equation*}
L_{2}(\mathcal{A}_{c}\hat{e}_{2},\hat{e}_{2})+L_{2}(\hat{e}_{2},\mathcal{A}_{c}\hat{e}_{2})= \mathcal{A}_{h}L_{2}(\hat{e}_{2},\hat{e}_{2})+\frac{1}{2!}\Pi_{h}D^{2}F(0)(\hat{e}_{2},\hat{e}_{2})
\end{equation*}
is equivalent to
\begin{equation*}
(2i\omega_{k}-\mathcal{A}_{h})L_{2}(\hat{e}_{2},\hat{e}_{2})= \frac{1}{2!}\Pi_{h}D^{2}F(0)(\hat{e}_{2},\hat{e}_{2}),
\end{equation*}
where
\begin{equation*}
  \begin{array}{ccl}
     &&D^{2}F(0)(\hat{e}_{2},\hat{e}_{2})
      = \left(
             \begin{array}{c}
               0_{\mathbb{R}} \\
               D^{2}W(0)\left(
                                          \begin{array}{c}
                                            0_{\mathbb{R}} \\
                                            \left(
                                              \begin{array}{c}
                                                0_{\mathbb{R}^{2}} \\
                                                \left(
                                                  \begin{array}{c}
                                                    b_{1} \\
                                                    b_{2} \\
                                                  \end{array}
                                                \right)
                                                 \\
                                              \end{array}
                                            \right)
                                             \\
                                          \end{array}
                                        \right)^{2}\\
             \end{array}
           \right)
      = \left(
                     \begin{array}{c}
                       0_{\mathbb{R}} \\
                       \tau_{k}D^{2}H(\overline{v}_{\tau_{k}})\left(
                                                                          \begin{array}{c}
                                                                            0_{\mathbb{R}^{2}} \\
                                                                            \left(
                                                                              \begin{array}{c}
                                                                                b_{1} \\
                                                                                b_{2} \\
                                                                              \end{array}
                                                                            \right)
                                                                             \\
                                                                          \end{array}
                                                                        \right)^{2}\\
                     \end{array}
                   \right).\\
  \end{array}
\end{equation*}
Thus, we have
\begin{equation*}
  D^{2}F(0)(\hat{e}_{2},\hat{e}_{2})=c_{2210}\left(
                                               \begin{array}{c}
                                                 0_{\mathbb{R}} \\
                                                 \left(
                                             \begin{array}{c}
                                                \left(
                                                 \begin{array}{l}
                                                   1 \\
                                                   0 \\
                                                 \end{array}
                                               \right)\\
                                               0_{L^{1}} \\
                                             \end{array}
                                           \right) \\
                                               \end{array}
                                             \right)
                                      -c_{2210}\left(
                                               \begin{array}{c}
                                                 0_{\mathbb{R}} \\
                                                 \left(
                                             \begin{array}{c}
                                                \left(
                                                 \begin{array}{l}
                                                   0 \\
                                                   1 \\
                                                 \end{array}
                                               \right)\\
                                               0_{L^{1}} \\
                                             \end{array}
                                           \right) \\
                                               \end{array}
                                             \right)
\end{equation*}
with
\begin{equation*}
    c_{2210}  =\frac{2\tau_{k} e^{-i\omega_{k}}(\xi+1-\xi e^{-i\omega_{k}})}{(-i\mu\tau_{k}+\omega_{k})(i\omega_{k}+\tau_{k})^{2}(\xi+1)^{2}}\big
    [p_{+}e^{i\omega_{k}}(i\tau_{k}-\omega_{k})-\omega_{k}+i\mu\tau_{k}\big].\\
\end{equation*}
Therefore, on the basis of (\ref{lambdaIBtaukPih10}) and (\ref{lambdaIBtaukPih01}), we have
\begin{equation}\label{hate2hate2}
  \begin{array}{ccl}
L_{2}(\hat{e}_{2},\hat{e}_{2})& = & \frac{1}{2}c_{2210}(2i\omega_{k}-\mathcal{A}_{h})^{-1}\Pi_{h}\left(
                                             \begin{array}{c}
                                               0_{\mathbb{R}} \\
                                               \left(
                                                 \begin{array}{c}
                                                   \left(
                                                     \begin{array}{c}
                                                       1 \\
                                                       0 \\
                                                     \end{array}
                                                   \right)
                                                    \\
                                                   0_{L^{1}} \\
                                                 \end{array}
                                               \right)
                                                \\
                                             \end{array}
                                           \right)\\
&&-\frac{1}{2}c_{2210}(2i\omega_{k}-\mathcal{A}_{h})^{-1}\Pi_{h}\left(
                                             \begin{array}{c}
                                               0_{\mathbb{R}} \\
                                               \left(
                                                 \begin{array}{c}
                                                   \left(
                                                     \begin{array}{c}
                                                       0 \\
                                                       1 \\
                                                     \end{array}
                                                   \right)
                                                    \\
                                                   0_{L^{1}} \\
                                                 \end{array}
                                               \right)
                                                \\
                                             \end{array}
                                           \right)\\
 & = & \frac{1}{2}c_{2210}\left(
 \begin{array}{c}
 0_{\mathbb{R}} \\
 \left(2i\omega_{k}-B_{\tau}|_{\hat{\Pi}_{h}(X)}\right)^{-1}\hat{\Pi}_{h}\left(
 \begin{array}{c}
 \left(
 \begin{array}{c}
 1 \\
 0 \\
 \end{array}
 \right)\\
 0_{L^{1}} \\
 \end{array}
 \right) \\
 \end{array}
 \right)\\
 &&-\frac{1}{2}c_{2210}\left(
 \begin{array}{c}
 0_{\mathbb{R}} \\
 \left(2i\omega_{k}-B_{\tau}|_{\hat{\Pi}_{h}(X)}\right)^{-1}\hat{\Pi}_{h}\left(
 \begin{array}{c}
 \left(
 \begin{array}{c}
 0 \\
 1 \\
 \end{array}
 \right)\\
 0_{L^{1}} \\
 \end{array}
 \right) \\
 \end{array}
 \right)\\
 & = &\left(
        \begin{array}{c}
          0_{\mathbb{R}} \\
          \left(
            \begin{array}{c}
              0_{\mathbb{R}^{2}} \\
               \left(
                 \begin{array}{c}
                   \psi_{2,2,1} \\
                   \psi_{2,2,2} \\
                 \end{array}
               \right)
               \\
            \end{array}
          \right)
           \\
        \end{array}
      \right),
  \end{array}
\end{equation}
where
\begin{equation}\label{psi221}
  \begin{array}{ccl}
    \psi_{2,2,1}& = & \frac{1}{2}c_{2210}\bigg(\frac{1}{|\Delta(2i\omega_{k})|}e^{-(2i\omega_{k}+\tau_{k})\cdot}
     -\left(\frac{d}{d\lambda}|\Delta(i\omega_{k})|\right)^{-1}
     \frac{e^{-(i\omega_{k}+\tau_{k})\cdot}}{i\omega_{k}} \\
     && -\left(\frac{d}{d\lambda}|\Delta(-i\omega_{k})|\right)^{-1}
       \frac{e^{-(-i\omega_{k}+\tau_{k})\cdot}}{3i\omega_{k}}\bigg)\\
  \end{array}
\end{equation}
and
\begin{equation}\label{psi222}
  \begin{array}{ccl}
    \psi_{2,2,2}& = & \frac{1}{2}c_{2210}\bigg(\frac{1}{|\Delta(2i\omega_{k})|}\frac{\eta\tau_{k}-(2i\omega_{k}+\tau_{k})}{2i\omega_{k}+\tau_{k}}
    e^{-(2i\omega_{k}+\mu\tau_{k})\cdot}
     -\left(\frac{d}{d\lambda}|\Delta(i\omega_{k})|\right)^{-1}
     \frac{p_{+}e^{-(i\omega_{k}+\mu\tau_{k})\cdot}}{i\omega_{k}} \\
     && -\left(\frac{d}{d\lambda}|\Delta(-i\omega_{k})|\right)^{-1}
       \frac{p_{-}e^{-(-i\omega_{k}+\mu\tau_{k})\cdot}}{3i\omega_{k}}\bigg).\\
  \end{array}
\end{equation}
\noindent \textbf{(ii) Computation of $L_{2}(\hat{e}_{2},\hat{e}_{3})$}: Since $\mathcal{A}_{c}\hat{e}_{2}=i\omega_{k}\hat{e}_{2}$ and $\mathcal{A}_{c}\hat{e}_{3}=-i\omega_{k}\hat{e}_{3}$, the equation
\begin{equation*}
L_{2}(\mathcal{A}_{c}\hat{e}_{2},\hat{e}_{3})+L_{2}(\hat{e}_{2},\mathcal{A}_{c}\hat{e}_{3})= \mathcal{A}_{h}L_{2}(\hat{e}_{2},\hat{e}_{3})+\frac{1}{2!}\Pi_{h}D^{2}F(0)(\hat{e}_{2},\hat{e}_{3})
\end{equation*}
is equivalent to
\begin{equation*}
(0-\mathcal{A}_{h})L_{2}(\hat{e}_{2},\hat{e}_{3})= \frac{1}{2!}\Pi_{h}D^{2}F(0)(\hat{e}_{2},\hat{e}_{3}),
\end{equation*}
where
\begin{equation*}
  \begin{array}{ccl}
     &&D^{2}F(0)(\hat{e}_{2},\hat{e}_{3})    \\
     & = & \left(
             \begin{array}{c}
               0_{\mathbb{R}} \\
               D^{2}W(0)\left(\left(
                                          \begin{array}{c}
                                            0_{\mathbb{R}} \\
                                            \left(
                                              \begin{array}{c}
                                                0_{\mathbb{R}^{2}} \\
                                                \left(
                                                  \begin{array}{c}
                                                    b_{1} \\
                                                    b_{2} \\
                                                  \end{array}
                                                \right)
                                                 \\
                                              \end{array}
                                            \right)
                                             \\
                                          \end{array}
                                        \right),  \left(
                                          \begin{array}{c}
                                            0_{\mathbb{R}} \\
                                            \left(
                                              \begin{array}{c}
                                                0_{\mathbb{R}^{2}} \\
                                                \left(
                                                  \begin{array}{c}
                                                    b_{3} \\
                                                    b_{4} \\
                                                  \end{array}
                                                \right) \\
                                              \end{array}
                                            \right)
                                             \\
                                          \end{array}
                                        \right)
               \right) \\
             \end{array}
           \right)
      \\
     & = &\left(
                     \begin{array}{c}
                       0_{\mathbb{R}} \\
                       \tau_{k}D^{2}H(\overline{v}_{\tau_{k}})\left(\left(
                                                                          \begin{array}{c}
                                                                            0_{\mathbb{R}^{2}} \\
                                                                            \left(
                                                                              \begin{array}{c}
                                                                                b_{1} \\
                                                                                b_{2} \\
                                                                              \end{array}
                                                                            \right)
                                                                             \\
                                                                          \end{array}
                                                                        \right),\left(
                                                                          \begin{array}{c}
                                                                            0_{\mathbb{R}^{2}} \\
                                                                            \left(
                                                                              \begin{array}{c}
                                                                                b_{3} \\
                                                                                b_{4} \\
                                                                              \end{array}
                                                                            \right) \\
                                                                          \end{array}
                                                                        \right)
                       \right) \\
                     \end{array}
                   \right).\\
  \end{array}
\end{equation*}
Thus, we have
\begin{equation*}
  D^{2}F(0)(\hat{e}_{2},\hat{e}_{3})=c_{2310}\left(
                                               \begin{array}{c}
                                                 0_{\mathbb{R}} \\
                                                 \left(
                                             \begin{array}{c}
                                                \left(
                                                 \begin{array}{l}
                                                   1 \\
                                                   0 \\
                                                 \end{array}
                                               \right)\\
                                               0_{L^{1}} \\
                                             \end{array}
                                           \right) \\
                                               \end{array}
                                             \right)
                                      -c_{2310}\left(
                                               \begin{array}{c}
                                                 0_{\mathbb{R}} \\
                                                 \left(
                                             \begin{array}{c}
                                                \left(
                                                 \begin{array}{l}
                                                   0 \\
                                                   1 \\
                                                 \end{array}
                                               \right)\\
                                               0_{L^{1}} \\
                                             \end{array}
                                           \right) \\
                                               \end{array}
                                             \right)
\end{equation*}
with
\begin{equation*}
  \begin{array}{ccl}
    c_{2310} &=&-\tau_{k}\bigg\{\frac{1}{\xi+1}
    \bigg(\frac{e^{-i\omega_{k}}+e^{i\omega_{k}}}{\omega_{k}^{2}+\tau_{k}^{2}}
    -\frac{p_{+}}{(-i\mu\tau_{k}+\omega_{k})(i\tau_{k}+\omega_{k})}
    +\frac{p_{-}}{(i\omega_{k}-\mu\tau_{k})(i\omega_{k}+\tau_{k})}\bigg)\\
    &&-\frac{\xi}{(\xi+1)^{2}}
    \bigg(\frac{2}{\omega_{k}^{2}+\tau_{k}^{2}}
    -\frac{p_{+}e^{i\omega_{k}}}{(-i\mu\tau_{k}+\omega_{k})(i\tau_{k}+\omega_{k})}
    +\frac{p_{-}e^{-i\omega_{k}}}{(i\omega_{k}-\mu\tau_{k})(i\omega_{k}+\tau_{k})}\bigg)\bigg\}.\\
  \end{array}
\end{equation*}
Therefore, according to (\ref{lambdaIBtaukPih10}) and (\ref{lambdaIBtaukPih01}), we have
\begin{equation}\label{hate2hate3}
  \begin{array}{ccl}
L_{2}(\hat{e}_{2},\hat{e}_{3})& = & \frac{1}{2}c_{2310}(0-\mathcal{A}_{h})^{-1}\Pi_{h}\left(
                                             \begin{array}{c}
                                               0_{\mathbb{R}} \\
                                               \left(
                                                 \begin{array}{c}
                                                   \left(
                                                     \begin{array}{c}
                                                       1 \\
                                                       0 \\
                                                     \end{array}
                                                   \right)
                                                    \\
                                                   0_{L^{1}} \\
                                                 \end{array}
                                               \right)
                                                \\
                                             \end{array}
                                           \right)\\
&&-\frac{1}{2}c_{2310}(0-\mathcal{A}_{h})^{-1}\Pi_{h}\left(
                                             \begin{array}{c}
                                               0_{\mathbb{R}} \\
                                               \left(
                                                 \begin{array}{c}
                                                   \left(
                                                     \begin{array}{c}
                                                       0 \\
                                                       1 \\
                                                     \end{array}
                                                   \right)
                                                    \\
                                                   0_{L^{1}} \\
                                                 \end{array}
                                               \right)
                                                \\
                                             \end{array}
                                           \right)\\
 & = & \frac{1}{2}c_{2310}\left(
 \begin{array}{c}
 0_{\mathbb{R}} \\
 \left(0-B_{\tau}|_{\hat{\Pi}_{h}(X)}\right)^{-1}\hat{\Pi}_{h}\left(
 \begin{array}{c}
 \left(
 \begin{array}{c}
 1 \\
 0 \\
 \end{array}
 \right)\\
 0_{L^{1}} \\
 \end{array}
 \right) \\
 \end{array}
 \right)\\
 &&-\frac{1}{2}c_{2310}\left(
 \begin{array}{c}
 0_{\mathbb{R}} \\
 \left(0-B_{\tau}|_{\hat{\Pi}_{h}(X)}\right)^{-1}\hat{\Pi}_{h}\left(
 \begin{array}{c}
 \left(
 \begin{array}{c}
 0 \\
 1 \\
 \end{array}
 \right)\\
 0_{L^{1}} \\
 \end{array}
 \right) \\
 \end{array}
 \right)\\
 & = &\left(
        \begin{array}{c}
          0_{\mathbb{R}} \\
          \left(
            \begin{array}{c}
              0_{\mathbb{R}^{2}} \\
               \left(
                 \begin{array}{c}
                   \psi_{2,3,1} \\
                   \psi_{2,3,2} \\
                 \end{array}
               \right)
               \\
            \end{array}
          \right)
           \\
        \end{array}
      \right),
  \end{array}
\end{equation}
where
\begin{equation}\label{psi231}
  \begin{array}{ccl}
    \psi_{2,3,1}& = & \frac{1}{2}c_{2310}\bigg(\frac{1}{|\Delta(0)|}e^{-\tau_{k}\cdot}
     -\left(\frac{d}{d\lambda}|\Delta(i\omega_{k})|\right)^{-1}
     \frac{e^{-(i\omega_{k}+\tau_{k})\cdot}}{-i\omega_{k}} \\
     && -\left(\frac{d}{d\lambda}|\Delta(-i\omega_{k})|\right)^{-1}
       \frac{e^{-(-i\omega_{k}+\tau_{k})\cdot}}{i\omega_{k}}\bigg)\\
  \end{array}
\end{equation}
and
\begin{equation}\label{psi232}
  \begin{array}{ccl}
    \psi_{2,3,2}& = & \frac{1}{2}c_{2310}\bigg(\frac{1}{|\Delta(0)|}\frac{\eta\tau_{k}-\tau_{k}}{\tau_{k}}
    e^{-\mu\tau_{k}\cdot}
     -\left(\frac{d}{d\lambda}|\Delta(i\omega_{k})|\right)^{-1}
     \frac{p_{+}e^{-(i\omega_{k}+\mu\tau_{k})\cdot}}{-i\omega_{k}} \\
     && -\left(\frac{d}{d\lambda}|\Delta(-i\omega_{k})|\right)^{-1}
       \frac{p_{-}e^{-(-i\omega_{k}+\mu\tau_{k})\cdot}}{i\omega_{k}}\bigg).\\
  \end{array}
\end{equation}
\noindent \textbf{(iii) Computation of $L_{2}(\hat{e}_{3},\hat{e}_{3})$}: Since $\mathcal{A}_{c}\hat{e}_{3}=-i\omega_{k}\hat{e}_{3}$, the equation
\begin{equation*}
L_{2}(\mathcal{A}_{c}\hat{e}_{3},\hat{e}_{3})+L_{2}(\hat{e}_{3},\mathcal{A}_{c}\hat{e}_{3})= \mathcal{A}_{h}L_{2}(\hat{e}_{3},\hat{e}_{3})+\frac{1}{2!}\Pi_{h}D^{2}F(0)(\hat{e}_{3},\hat{e}_{3})
\end{equation*}
is equivalent to
\begin{equation*}
(-2i\omega_{k}-\mathcal{A}_{h})L_{2}(\hat{e}_{3},\hat{e}_{3})= \frac{1}{2!}\Pi_{h}D^{2}F(0)(\hat{e}_{3},\hat{e}_{3}),
\end{equation*}
where
\begin{equation*}
  \begin{array}{ccl}
     &&D^{2}F(0)(\hat{e}_{3},\hat{e}_{3})
     =  \left(
             \begin{array}{c}
               0_{\mathbb{R}} \\
               D^{2}W(0)\left(
                                          \begin{array}{c}
                                            0_{\mathbb{R}} \\
                                            \left(
                                              \begin{array}{c}
                                                0_{\mathbb{R}^{2}} \\
                                                \left(
                                                  \begin{array}{c}
                                                    b_{3} \\
                                                    b_{4} \\
                                                  \end{array}
                                                \right)
                                                 \\
                                              \end{array}
                                            \right)
                                             \\
                                          \end{array}
                                        \right)^{2}\\
             \end{array}
           \right)
      = \left(
                     \begin{array}{c}
                       0_{\mathbb{R}} \\
                       \tau_{k}D^{2}H(\overline{v}_{\tau_{k}})\left(
                                                                          \begin{array}{c}
                                                                            0_{\mathbb{R}^{2}} \\
                                                                            \left(
                                                  \begin{array}{c}
                                                    b_{3} \\
                                                    b_{4} \\
                                                  \end{array}
                                                \right) \\
                                                                          \end{array}
                                                                        \right)^{2}\\
                     \end{array}
                   \right).
      \\
  \end{array}
\end{equation*}
Thus, we have
\begin{equation*}
  D^{2}F(0)(\hat{e}_{3},\hat{e}_{3})=c_{3310}\left(
                                               \begin{array}{c}
                                                 0_{\mathbb{R}} \\
                                                 \left(
                                             \begin{array}{c}
                                                \left(
                                                 \begin{array}{l}
                                                   1 \\
                                                   0 \\
                                                 \end{array}
                                               \right)\\
                                               0_{L^{1}} \\
                                             \end{array}
                                           \right) \\
                                               \end{array}
                                             \right)
                                      -c_{3310}\left(
                                               \begin{array}{c}
                                                 0_{\mathbb{R}} \\
                                                 \left(
                                             \begin{array}{c}
                                                \left(
                                                 \begin{array}{l}
                                                   0 \\
                                                   1 \\
                                                 \end{array}
                                               \right)\\
                                               0_{L^{1}} \\
                                             \end{array}
                                           \right) \\
                                               \end{array}
                                             \right)
\end{equation*}
with
\begin{equation*}
  \begin{array}{ccl}
    c_{3310} & =&\frac{2\tau_{k}(\xi+1-\xi e^{i\omega_{k}})\left(e^{i\omega_{k}}(i\omega_{k}-\mu\tau)-p_{-}(i\omega_{k}-\tau)\right)}
    {(i\omega_{k}-\mu\tau_{k})(i\tau_{k}+\omega_{k})^{2}(\xi+1)^{2}}.\\
  \end{array}
\end{equation*}
Therefore, in accordance with (\ref{lambdaIBtaukPih10}) and (\ref{lambdaIBtaukPih01}), we have
\begin{equation}\label{hate3hate3}
  \begin{array}{ccl}
L_{2}(\hat{e}_{3},\hat{e}_{3})& = & \frac{1}{2}c_{3310}(-2i\omega_{k}-\mathcal{A}_{h})^{-1}\Pi_{h}\left(
                                             \begin{array}{c}
                                               0_{\mathbb{R}} \\
                                               \left(
                                                 \begin{array}{c}
                                                   \left(
                                                     \begin{array}{c}
                                                       1 \\
                                                       0 \\
                                                     \end{array}
                                                   \right)
                                                    \\
                                                   0_{L^{1}} \\
                                                 \end{array}
                                               \right)
                                                \\
                                             \end{array}
                                           \right)\\
&&-\frac{1}{2}c_{3310}(-2i\omega_{k}-\mathcal{A}_{h})^{-1}\Pi_{h}\left(
                                             \begin{array}{c}
                                               0_{\mathbb{R}} \\
                                               \left(
                                                 \begin{array}{c}
                                                   \left(
                                                     \begin{array}{c}
                                                       0 \\
                                                       1 \\
                                                     \end{array}
                                                   \right)
                                                    \\
                                                   0_{L^{1}} \\
                                                 \end{array}
                                               \right)
                                                \\
                                             \end{array}
                                           \right)\\
 & = & \frac{1}{2}c_{3310}\left(
 \begin{array}{c}
 0_{\mathbb{R}} \\
 \left(-2i\omega_{k}-B_{\tau}|_{\hat{\Pi}_{h}(X)}\right)^{-1}\hat{\Pi}_{h}\left(
 \begin{array}{c}
 \left(
 \begin{array}{c}
 1 \\
 0 \\
 \end{array}
 \right)\\
 0_{L^{1}} \\
 \end{array}
 \right) \\
 \end{array}
 \right)\\
 &&-\frac{1}{2}c_{3310}\left(
 \begin{array}{c}
 0_{\mathbb{R}} \\
 \left(-2i\omega_{k}-B_{\tau}|_{\hat{\Pi}_{h}(X)}\right)^{-1}\hat{\Pi}_{h}\left(
 \begin{array}{c}
 \left(
 \begin{array}{c}
 0 \\
 1 \\
 \end{array}
 \right)\\
 0_{L^{1}} \\
 \end{array}
 \right) \\
 \end{array}
 \right)\\
 & = &\left(
        \begin{array}{c}
          0_{\mathbb{R}} \\
          \left(
            \begin{array}{c}
              0_{\mathbb{R}^{2}} \\
               \left(
                 \begin{array}{c}
                   \psi_{3,3,1} \\
                   \psi_{3,3,2} \\
                 \end{array}
               \right)
               \\
            \end{array}
          \right)
           \\
        \end{array}
      \right),
  \end{array}
\end{equation}
where
\begin{equation}\label{psi331}
  \begin{array}{ccl}
    \psi_{3,3,1}& = & \frac{1}{2}c_{3310}\bigg(\frac{1}{|\Delta(-2i\omega_{k})|}e^{-(-2i\omega_{k}+\tau_{k})\cdot}
     -\left(\frac{d}{d\lambda}|\Delta(i\omega_{k})|\right)^{-1}
     \frac{e^{-(i\omega_{k}+\tau_{k})\cdot}}{-3i\omega_{k}} \\
     && -\left(\frac{d}{d\lambda}|\Delta(-i\omega_{k})|\right)^{-1}
       \frac{e^{-(-i\omega_{k}+\tau_{k})\cdot}}{-i\omega_{k}}\bigg)\\
  \end{array}
\end{equation}
and
\begin{equation}\label{psi332}
  \begin{array}{ccl}
    \psi_{3,3,2}& = & \frac{1}{2}c_{3310}\bigg(\frac{1}{|\Delta(-2i\omega_{k})|}\frac{\eta\tau_{k}-(-2i\omega_{k}+\tau_{k})}{-2i\omega_{k}+\tau_{k}}
    e^{-(-2i\omega_{k}+\mu\tau_{k})\cdot}
     -\left(\frac{d}{d\lambda}|\Delta(i\omega_{k})|\right)^{-1}
     \frac{p_{+}e^{-(i\omega_{k}+\mu\tau_{k})\cdot}}{-3i\omega_{k}} \\
     && -\left(\frac{d}{d\lambda}|\Delta(-i\omega_{k})|\right)^{-1}
       \frac{p_{-}e^{-(-i\omega_{k}+\mu\tau_{k})\cdot}}{-i\omega_{k}}\bigg).\\
  \end{array}
\end{equation}
By using a similar method, we obtain the following results:
\begin{equation}\label{L2L2e2e3}
  L_{2}(\hat{e}_{2},\hat{e}_{3})=L_{2}(\hat{e}_{3},\hat{e}_{2})=\left(
                                                                  \begin{array}{c}
                                                                    0_{\mathbb{R}} \\
                                                                    \left(
                                                                      \begin{array}{c}
                                                                        0_{\mathbb{R}^{2}} \\
                                                                        \left(
                                                                          \begin{array}{c}
                                                                            \psi_{2,3,1} \\
                                                                            \psi_{2,3,2} \\
                                                                          \end{array}
                                                                        \right)
                                                                         \\
                                                                      \end{array}
                                                                    \right)
                                                                     \\
                                                                  \end{array}
                                                                \right).
\end{equation}

\subsection{Normal Form of the reduced system}
\noindent

In this section, we will compute the normal form by applying the normal form theory developed in Liu et al. \cite{LiuZhihuaMagalPierreRuanShigui-JDE-2014} to system (\ref{dwtdt}). Define $\Theta_{m}^{c}: V^{m}(\mathcal{X}_{c},\mathcal{X}_{c})\rightarrow V^{m}(\mathcal{X}_{c},\mathcal{X}_{c})$ by
\begin{equation}\label{Thetamc}
  \begin{array}{cc}
    \Theta_{m}^{c}(G_{c}):=[\mathcal{A}_{c},G_{c}], & \forall G_{c}\in V^{m}(\mathcal{X}_{c},\mathcal{X}_{c}), \\
  \end{array}
\end{equation}
and $\Theta_{m}^{h}: V^{m}(\mathcal{X}_{c},\mathcal{X}_{h}\cap D(\mathcal{A}))\rightarrow V^{m}(\mathcal{X}_{c},\mathcal{X}_{h})$ by
\begin{equation}\label{Thetamh}
  \begin{array}{cc}
    \Theta_{m}^{h}(G_{h}):=[\mathcal{A},G_{h}], & \forall G_{h}\in V^{m}(\mathcal{X}_{c},\mathcal{X}_{h}\cap D(\mathcal{A})). \\
  \end{array}
\end{equation}
We decompose $V^{m}(\mathcal{X}_{c},\mathcal{X}_{c})$ into the direction sum
\begin{equation*}
  V^{m}(\mathcal{X}_{c},\mathcal{X}_{c})=\mathcal{R}_{m}^{c}\oplus\mathcal{C}_{m}^{c},
\end{equation*}
where
\begin{equation*}
  \mathcal{R}_{m}^{c}:=\mathrm{R}(\Theta_{m}^{c}),
\end{equation*}
is the range of $\Theta_{m}^{c}$, and $\mathcal{C}_{m}^{c}$ is some complementary space of $\mathcal{R}_{m}^{c}$ into $V^{m}(\mathcal{X}_{c},\mathcal{X}_{c})$. Define $\mathcal{P}_{m}: V^{m}(\mathcal{X}_{c},\mathcal{X})\rightarrow V^{m}(\mathcal{X}_{c},\mathcal{X})$ the bounded linear projector satisfying
\begin{equation*}
  \begin{array}{ccc}
     \mathcal{P}_{m}(V^{m}(\mathcal{X}_{c},\mathcal{X}))=\mathcal{R}_{m}^{c}\oplus V^{m}(\mathcal{X}_{c},\mathcal{X}_{h}) & \mbox{and} & (I-\mathcal{P}_{m})(V^{m}(\mathcal{X}_{c},\mathcal{X}))=\mathcal{C}_{m}^{c}. \\
  \end{array}
\end{equation*}
Now we apply the method described in [\cite{LiuZhihuaMagalPierreRuanShigui-JDE-2014} Theorem 4.4 for $k=3$] to system (\ref{dwtdt}). The main point is to compute $G_{2}\in V^{2}(\mathcal{X}_{c},D(\mathcal{A}))$ defined such that
\begin{equation}\label{mathcalAG2}
  \begin{array}{ccc}
    [\mathcal{A},G_{2}](w_{c})=\mathcal{P}_{2}[\frac{1}{2!}D^{2}F(0)(w_{c},w_{c})] & \mbox{for each} & w_{c}\in \mathcal{X}_{c}, \\
  \end{array}
\end{equation}
in order to obtain the normal form because the reduced system is the following
\begin{equation}\label{dwct/dt}
    \begin{array}{ccl}
      \frac{dw_{c}(t)}{dt} & = & \mathcal{A}_{c}w_{c}(t)+\frac{1}{2!}\Pi_{c}D^{2}F_{3}(0)(w_{c}(t),w_{c}(t)) \\
       & & +\frac{1}{3!}\Pi_{c}D^{3}F_{3}(0)(w_{c}(t),w_{c}(t),w_{c}(t))+R_{c}(w_{c}(t)) \\
    \end{array}
\end{equation}
where
\begin{equation}\label{PicD2}
  \begin{array}{ccl}
\frac{1}{2!}\Pi_{c}D^{2}F_{3}(0)(w_{c},w_{c}) & = & \frac{1}{2!}\Pi_{c}D^{2}F_{2}(0)(w_{c},w_{c}) \\
     & = & \frac{1}{2!}\Pi_{c}D^{2}F_{2}(w_{c},w_{c})-[\mathcal{A}_{c},\Pi_{c}G_{2}](w_{c})  \\
  \end{array}
\end{equation}
and
\begin{equation}\label{PicD3}
  \begin{array}{ccl}
\frac{1}{3!}\Pi_{c}D^{3}F_{3}(0)(w_{c},w_{c},w_{c}) & = & \frac{1}{3!}\Pi_{c}D^{3}F_{2}(0)(w_{c},w_{c},w_{c})-\Pi_{c}[\mathcal{A},G_{3}](w_{c}) \\
     & = & \frac{1}{3!}\Pi_{c}D^{3}F_{2}(0)(w_{c},w_{c},w_{c})-[\mathcal{A}_{c},\Pi_{c}G_{3}](w_{c}).  \\
  \end{array}
\end{equation}
Set
\begin{equation*}
     \begin{array}{cc}
       G_{m,k}:=\Pi_{k}G_{m}, & \forall k=c,h,m\geq 2. \\
     \end{array}
\end{equation*}
Recall that (\ref{mathcalAG2}) is equivalent to finding $G_{2,c}\in V^{2}(\mathcal{X}_{c},\mathcal{X}_{c})$ and $G_{2,h}\in V^{2}(\mathcal{X}_{c},\mathcal{X}_{h}\cap D(\mathcal{A}))$ satisfying
\begin{equation}\label{mathcalAcG2c}
  [\mathcal{A}_{c},G_{2,c}]=\Pi_{c}\mathcal{P}_{2}[\frac{1}{2!}D^{2}F(0)(w_{c},w_{c})]
\end{equation}
and
\begin{equation}\label{mathcalAG2h}
  [\mathcal{A},G_{2,h}]=\Pi_{h}\mathcal{P}_{2}[\frac{1}{2!}D^{2}F(0)(w_{c},w_{c})].
\end{equation}
From (\ref{PicD3}), we know that the third order term $\frac{1}{3!}\Pi_{c}D^{3}F_{2}(0)(w_{c},w_{c},w_{c})$ in the equation is needed after computing the normal form up to the second order. In the following lemma we find the expression of $\frac{1}{3!}\Pi_{c}D^{3}F_{2}(0)(w_{c},w_{c},w_{c})$.
Set
\begin{equation*}
  w_{c}=\hat{\tau}\hat{e}_{1}+x_{1}\hat{e}_{2}+x_{2}\hat{e}_{3}.
\end{equation*}
We shall compute the normal form expressed in terms of the basis $\{\hat{e}_{1},\hat{e}_{2},\hat{e}_{3}\}$. Consider $V^{m}(\mathbb{C}^{3},\mathbb{C}^{3})$ and $V^{m}(\mathbb{C}^{3},\mathcal{X}_{h}\cap D(\mathcal{A}))$, which denote the linear space of the homogeneous polynomials of degree $m$ in $3$ real variables, $\hat{\tau}, x=(x_{1},x_{2})$ with coefficients in $\mathbb{C}^{3}$ and $\mathcal{X}_{h}\cap D(\mathcal{A})$, respectively. The operators $\Theta_{m}^{c}$ and $\Theta_{m}^{h}$ considered in (\ref{Thetamc}) and (\ref{Thetamh}) now act in the spaces $V^{m}(\mathbb{C}^{3},\mathbb{C}^{3})$ and $V^{m}(\mathbb{C}^{3},\mathcal{X}_{h}\cap D(\mathcal{A}))$, respectively, and satisfy
\begin{equation*}
    \begin{array}{ccl}
      \Theta_{m}^{c}(G_{m,c})\left(
                               \begin{array}{c}
                                 \hat{\tau} \\
                                 x_{1} \\
                                 x_{2} \\
                               \end{array}
                             \right)
       & = & [\mathcal{A}_{c},G_{m,c}]\left(
                               \begin{array}{c}
                                 \hat{\tau} \\
                                 x_{1} \\
                                 x_{2} \\
                               \end{array}
                             \right)=DG_{m,c}\mathcal{A}_{c}\left(
                               \begin{array}{c}
                                 \hat{\tau} \\
                                 x_{1} \\
                                 x_{2} \\
                               \end{array}
                             \right)-\mathcal{A}_{c}G_{m,c}\left(
                               \begin{array}{c}
                                 \hat{\tau} \\
                                 x_{1} \\
                                 x_{2} \\
                               \end{array}
                             \right) \\
       & = & \left(
               \begin{array}{c}
                 D_{x}G_{m,c}^{1}\left(
                               \begin{array}{c}
                                 \hat{\tau} \\
                                 x_{1} \\
                                 x_{2} \\
                               \end{array}
                             \right)M_{c}x \\
                 D_{x}\left(
                        \begin{array}{c}
                          G_{m,c}^{2} \\
                          G_{m,c}^{3} \\
                        \end{array}
                      \right)\left(
                               \begin{array}{c}
                                 \hat{\tau} \\
                                 x_{1} \\
                                 x_{2} \\
                               \end{array}
                             \right)M_{c}\left(
                                           \begin{array}{c}
                                             x_{1} \\
                                             x_{2} \\
                                           \end{array}
                                         \right)-M_{c}\left(
                        \begin{array}{c}
                          G_{m,c}^{2} \\
                          G_{m,c}^{3} \\
                        \end{array}
                      \right)\left(
                               \begin{array}{c}
                                 \hat{\tau} \\
                                 x_{1} \\
                                 x_{2} \\
                               \end{array}
                             \right)\\
               \end{array}
             \right),
        \\
    \end{array}
\end{equation*}
\begin{equation*}
  \Theta_{m}^{h}(G_{m,h})=[\mathcal{A},G_{m,h}]=DG_{m,h}\mathcal{A}_{c}-\mathcal{A}_{h}G_{m,h},
\end{equation*}
\begin{equation}\label{Gmc}
    \begin{array}{cc}
      \forall G_{m,c}\left(
                       \begin{array}{c}
                         \hat{\tau} \\
                         x_{1} \\
                         x_{2} \\
                       \end{array}
                     \right)=\left(
                               \begin{array}{c}
                                 G_{m,c}^{1} \\
                                 G_{m,c}^{2} \\
                                 G_{m,c}^{3} \\
                               \end{array}
                             \right)\left(
                       \begin{array}{c}
                         \hat{\tau} \\
                         x_{1} \\
                         x_{2} \\
                       \end{array}
                     \right)\in V^{m}(\mathbb{C}^{3},\mathbb{C}^{3}),
       & \forall G_{m,h}\in V^{m}(\mathbb{C}^{3},\mathcal{X}_{h}\cap D(\mathcal{A})) \\
    \end{array}
\end{equation}
with
\begin{equation*}
  \begin{array}{ccc}
    \mathcal{A}_{c}=\left(
                      \begin{array}{ccc}
                        0 & 0 & 0 \\
                        0 & i\omega_{k} & 0 \\
                        0 & 0 & -i\omega_{k} \\
                      \end{array}
                    \right)
     & \mbox{and} & M_{c}=\left(
                     \begin{array}{cc}
                       i\omega_{k} & 0 \\
                       0 & -i\omega_{k} \\
                     \end{array}
                   \right).
      \\
  \end{array}
\end{equation*}
We define $\overline{\Theta}_{m}^{c}: V^{m}(\mathbb{C}^{3},\mathbb{C}^{2})\rightarrow V^{m}(\mathbb{C}^{3},\mathbb{C}^{2})$ by
\begin{equation}\label{overlineThetamc}
  \begin{array}{ccl}
      \overline{\Theta}_{m}^{c}\left(
                        \begin{array}{c}
                          G_{m,c}^{2} \\
                          G_{m,c}^{3} \\
                        \end{array}
                      \right)\left(
                       \begin{array}{c}
                         \hat{\tau} \\
                         x_{1} \\
                         x_{2} \\
                       \end{array}
                     \right) & = & D_{x}\left(
                        \begin{array}{c}
                          G_{m,c}^{2} \\
                          G_{m,c}^{3} \\
                        \end{array}
                      \right)\left(
                       \begin{array}{c}
                         \hat{\tau} \\
                         x_{1} \\
                         x_{2} \\
                       \end{array}
                     \right)M_{c}\left(
                                    \begin{array}{c}
                                      x_{1} \\
                                      x_{2} \\
                                    \end{array}
                                  \right) \\
     &  & -M_{c}\left(
                        \begin{array}{c}
                          G_{m,c}^{2} \\
                          G_{m,c}^{3} \\
                        \end{array}
                      \right)\left(
                       \begin{array}{c}
                         \hat{\tau} \\
                         x_{1} \\
                         x_{2} \\
                       \end{array}
                     \right),
                      \forall \left(
                        \begin{array}{c}
                          G_{m,c}^{2} \\
                          G_{m,c}^{3} \\
                        \end{array}
                      \right)\in V^{m}(\mathbb{C}^{3},\mathbb{C}^{2}). \\
  \end{array}
\end{equation}
From Lemma 5.19 in \cite{LiuZhihuaMagalPierreRuanShigui-JDE-2014}, we have
\begin{equation}\label{NoverlineTheta23}
    \begin{array}{ccl}
      N(\overline{\Theta}_{2}^{c}) & = & \mbox{span}\left\{\left(
                                                   \begin{array}{c}
                                                     x_{1}\hat{\tau} \\
                                                     0 \\
                                                   \end{array}
                                                 \right),
                                                 \left(
                                                   \begin{array}{c}
                                                     0 \\
                                                     x_{2}\hat{\tau} \\
                                                   \end{array}
                                                 \right)
    \right\}, \\
    R(\overline{\Theta}_{2}^{c})& = &\mbox{span}\left\{
                                                   \begin{array}{c}
                                                     \left(
                                                   \begin{array}{c}
                                                     x_{1}x_{2} \\
                                                     0 \\
                                                   \end{array}
                                                 \right), \left(
                                                   \begin{array}{c}
                                                     x_{1}^{2} \\
                                                     0 \\
                                                   \end{array}
                                                 \right),\left(
                                                   \begin{array}{c}
                                                     x_{2}\hat{\tau} \\
                                                     0 \\
                                                   \end{array}
                                                 \right),\left(
                                                   \begin{array}{c}
                                                     x_{2}^{2} \\
                                                     0 \\
                                                   \end{array}
                                                 \right),\left(
                                                   \begin{array}{c}
                                                     \tau^{2} \\
                                                     0 \\
                                                   \end{array}
                                                 \right) \\
                                                     \left(
                                                   \begin{array}{c}
                                                     0 \\
                                                     x_{1}x_{2} \\
                                                   \end{array}
                                                 \right),\left(
                                                   \begin{array}{c}
                                                     0 \\
                                                     x_{1}\hat{\tau} \\
                                                   \end{array}
                                                 \right),\left(
                                                   \begin{array}{c}
                                                     0 \\
                                                     x_{1}^{2} \\
                                                   \end{array}
                                                 \right),\left(
                                                   \begin{array}{c}
                                                     0 \\
                                                     x_{2}^{2} \\
                                                   \end{array}
                                                 \right),\left(
                                                   \begin{array}{c}
                                                     0 \\
                                                     \tau^{2} \\
                                                   \end{array}
                                                 \right) \\
                                                   \end{array}
  \right\},\\
      N(\overline{\Theta}_{3}^{c}) & = & \mbox{span}\left\{\left(
                                                   \begin{array}{c}
                                                     x_{1}^{2}x_{2} \\
                                                     0 \\
                                                   \end{array}
                                                 \right),
                                                 \left(
                                                   \begin{array}{c}
                                                     x_{1}\hat{\tau}^{2} \\
                                                     0 \\
                                                   \end{array}
                                                 \right),
                                                 \left(
                                                   \begin{array}{c}
                                                     0 \\
                                                     x_{1}x_{2}^{2} \\
                                                   \end{array}
                                                 \right),
                                                 \left(
                                                   \begin{array}{c}
                                                     0 \\
                                                     x_{2}\hat{\tau}^{2} \\
                                                   \end{array}
                                                 \right)
    \right\}, \\
   R(\overline{\Theta}_{3}^{c}) & = & \mbox{span}\left\{
                                                 \begin{array}{c}
                                                   \left(
                                                      \begin{array}{c}
                                                        x_{1}x_{2}\hat{\tau} \\
                                                        0 \\
                                                      \end{array}
                                                    \right), \left(
                                                      \begin{array}{c}
                                                        x_{1}x_{2}^{2} \\
                                                        0 \\
                                                      \end{array}
                                                    \right),\left(
                                                      \begin{array}{c}
                                                        x_{1}^{2}\hat{\tau} \\
                                                        0 \\
                                                      \end{array}
                                                    \right),\left(
                                                      \begin{array}{c}
                                                        x_{1}^{3} \\
                                                        0 \\
                                                      \end{array}
                                                    \right)\\
                                                   \left(
                                                      \begin{array}{c}
                                                        x_{2}\hat{\tau}^{2} \\
                                                        0 \\
                                                      \end{array}
                                                    \right), \left(
                                                      \begin{array}{c}
                                                        x_{2}^{2}\hat{\tau} \\
                                                        0 \\
                                                      \end{array}
                                                    \right),\left(
                                                      \begin{array}{c}
                                                        x_{2}^{3} \\
                                                        0 \\
                                                      \end{array}
                                                    \right),\left(
                                                      \begin{array}{c}
                                                        \hat{\tau}^{3} \\
                                                        0 \\
                                                      \end{array}
                                                    \right) \\
                                                   \left(
                                                      \begin{array}{c}
                                                        0 \\
                                                        x_{1}x_{2}\hat{\tau} \\
                                                      \end{array}
                                                    \right), \left(
                                                      \begin{array}{c}
                                                        0 \\
                                                        x_{1}\hat{\tau}^{2} \\
                                                      \end{array}
                                                    \right), \left(
                                                      \begin{array}{c}
                                                        0 \\
                                                        x_{1}^{2}x_{2} \\
                                                      \end{array}
                                                    \right), \left(
                                                      \begin{array}{c}
                                                        0 \\
                                                        x_{1}^{2}\hat{\tau} \\
                                                      \end{array}
                                                    \right) \\
                                                   \left(
                                                      \begin{array}{c}
                                                        0 \\
                                                        x_{1}^{3} \\
                                                      \end{array}
                                                    \right),\left(
                                                      \begin{array}{c}
                                                        0 \\
                                                        x_{2}^{2}\hat{\tau} \\
                                                      \end{array}
                                                    \right), \left(
                                                      \begin{array}{c}
                                                        0 \\
                                                        x_{2}^{3} \\
                                                      \end{array}
                                                    \right), \left(
                                                      \begin{array}{c}
                                                        0 \\
                                                        \hat{\tau}^{3} \\
                                                      \end{array}
                                                    \right) \\
                                                 \end{array}
\right\}.\\
    \end{array}
\end{equation}
Define $P_{m}^{R}$ and $P_{m}^{N}: V^{m}(\mathbb{C}^{3},\mathbb{C}^{2})\rightarrow V^{m}(\mathbb{C}^{3},\mathbb{C}^{2})$ the bounded linear projectors satisfying
\begin{equation*}
  P_{m}^{R}(V^{m}(\mathbb{C}^{3},\mathbb{C}^{2}))=R(\overline{\Theta}_{m}^{c})
\end{equation*}
and
\begin{equation*}
  P_{m}^{N}(V^{m}(\mathbb{C}^{3},\mathbb{C}^{2}))=N(\overline{\Theta}_{m}^{c}).
\end{equation*}
We are now ready to compute the normal form of the reduced system expressed in terms of the basis $\left\{\hat{e}_{1},\hat{e}_{2},\hat{e}_{3}\right\}$ of $\mathcal{X}_{c}$. From (\ref{PicD2F0}), (\ref{PihD2F0}), (\ref{mathcalAcG2c}), (\ref{mathcalAG2h}), (\ref{Gmc}) and (\ref{NoverlineTheta23}), we know that to find $G_{2}\in V^{2}(\mathcal{X}_{c},D(\mathcal{A}))$ defined in (\ref{mathcalAG2}) is equivalent to finding $G_{2,c}=\left(

\end{equation*}
Following Chow and Hale \cite{ShuiNeeChowJackKHale-1982} we know that the sign of $\iota_{1}\iota_{2}$ determines the direction of the bifurcation and that the sign of $\iota_{2}$ determines the stability of the nontrivial periodic orbits. In summary we have the following theorem.
\begin{theorem}
The flow of (\ref{system}) on the center manifold of the positive equilibrium at $\tau=\tau_{k}, k\in \mathbb{N}^{+}$, is given by (\ref{rhoxi}). Furthermore, we have the following
\begin{itemize}
  \item [(i)] Hopf bifurcation is supercritical if $\iota_{1}\iota_{2}<0$, and subcritical if $\iota_{1}\iota_{2}>0$;
  \item [(ii)] the nontrivial periodic solution is stable if $\iota_{2}<0$, and unstable if $\iota_{2}>0$.
\end{itemize}
\end{theorem}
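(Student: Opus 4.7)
The plan is to assemble the ingredients already produced in Sections~4.1--4.5 into the classical Hopf normal form framework of Chow and Hale \cite{ShuiNeeChowJackKHale-1982}. Since Theorem~\ref{HopfBifurcation} guarantees the simple crossing of a pair $\pm i\omega_{k}$ at $\tau=\tau_{k}$ with $\alpha'(\tau_{k})>0$, and since Lemma~4.3 shows the essential spectrum remains bounded away from the imaginary axis, all hypotheses needed to invoke the center manifold and normal form theorem of Liu, Magal and Ruan \cite{LiuZhihuaMagalPierreRuanShigui-JDE-2014} for non-densely defined Cauchy problems are in place. The output of that machinery is precisely the reduced system written in terms of the basis $\{\hat{e}_{1},\hat{e}_{2},\hat{e}_{3}\}$ of $\mathcal{X}_{c}$, whose $(x_{1},x_{2})$--block, truncated modulo the $\hat{\tau}$--flow and the remainder $R_{c}$, I have already shown to equal
\[
\dot{x}_{1}=i\omega_{k}x_{1}+A_{1}x_{1}\hat{\tau}+C_{1}x_{1}^{2}x_{2}+o(|x|\hat{\tau}^{2}+|(\hat{\tau},x)|^{4}),
\]
together with the conjugate equation for $x_{2}$.

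First I would verify that the real change of coordinates $x_{1}=w_{1}-iw_{2}$, $x_{2}=w_{1}+iw_{2}$ (note: the excerpt's $x_{2}=w_{1}-iw_{2}$ is a typo; the conjugacy relation $\overline{\Pi_{i\omega_{k}}x}=\Pi_{-i\omega_{k}}\overline{x}$ established in Section~4.2 forces $x_{2}=\overline{x}_{1}$) conjugates the complex normal form to a planar real system whose linear part is $\hat{\tau}\,\mathrm{Re}(A_{1})\,I - \omega_{k}J$ with $J$ the standard symplectic matrix. Passing to polar coordinates $w_{1}=\rho\cos\xi$, $w_{2}=\rho\sin\xi$, the cubic resonant monomial $C_{1}x_{1}^{2}x_{2}=C_{1}\rho^{3}e^{i\xi}$ contributes $\mathrm{Re}(C_{1})\rho^{3}$ to the radial equation, yielding exactly the planar system (\ref{rhoxi}) with $\iota_{1}=\mathrm{Re}(A_{1})$ and $\iota_{2}=\mathrm{Re}(C_{1})$.

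Next, I would apply the standard analysis of the planar Hopf normal form: the non-trivial zeros of the truncated radial equation $\iota_{1}\hat{\tau}\rho+\iota_{2}\rho^{3}=0$ are $\rho^{*}=\sqrt{-\iota_{1}\hat{\tau}/\iota_{2}}$, which exist for $\hat\tau$ of one sign or the other according to the sign of $\iota_{1}\iota_{2}$. Combined with $\alpha'(\tau_{k})>0$ from the transversality Lemma~3.2, this gives supercriticality when $\iota_{1}\iota_{2}<0$ and subcriticality when $\iota_{1}\iota_{2}>0$. Linearising the radial equation at $\rho^{*}$ produces the multiplier $-2\iota_{1}\hat{\tau}=2\iota_{2}(\rho^{*})^{2}$, so the periodic orbit is asymptotically stable precisely when $\iota_{2}<0$ and unstable when $\iota_{2}>0$. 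The remainder terms $o(\hat{\tau}^{2}\rho+|(\rho,\hat{\tau})|^{4})$ are harmless for this persistence/stability analysis by the implicit function theorem and standard perturbation arguments for hyperbolic limit cycles, as detailed in \cite{ShuiNeeChowJackKHale-1982}.

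The main obstacle is not conceptual but bookkeeping: one must trust the extensive algebraic computations in Sections~4.3--4.5 that produce $A_{1}$ and, crucially, $C_{1}$. The delicate point is the $C_{1}$ coefficient, whose derivation requires (i) correctly evaluating $\Pi_{h}D^{2}F(0)$ on the three monomials $\hat{e}_{2}^{2}$, $\hat{e}_{2}\hat{e}_{3}$, $\hat{e}_{3}^{2}$ to obtain $L_{2}$, (ii) solving the resolvent equations in $X_{h}$ at the frequencies $\pm 2i\omega_{k}$ and $0$, and (iii) substituting these back into $\Pi_{c}D^{2}F(0)(w_{c},G_{2}(w_{c}))$ and combining with $\Pi_{c}D^{3}F(0)(w_{c})^{3}$, taking care that only the resonant monomials $x_{1}^{2}x_{2}$ and $x_{1}x_{2}^{2}$ (which lie in $N(\overline{\Theta}_{3}^{c})$ by (\ref{NoverlineTheta23})) survive the projection $P_{3}^{N}$. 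Once these computations are in hand, the theorem follows immediately by reading off the signs of $\mathrm{Re}(A_{1})$ and $\mathrm{Re}(C_{1})$ and invoking the planar Hopf criterion.
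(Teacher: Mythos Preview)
Your proposal is correct and follows essentially the same route as the paper: after the normal-form computations of Sections~4.1--4.5 produce the reduced system with coefficients $A_{1}$ and $C_{1}$, the paper passes to real and then polar coordinates to obtain (\ref{rhoxi}) and simply invokes Chow and Hale \cite{ShuiNeeChowJackKHale-1982} for the sign criteria, exactly as you do (your radial-equation argument just makes that citation explicit). You are also right that the change of variables should read $x_{2}=w_{1}+iw_{2}$; the printed $x_{2}=w_{1}-iw_{2}$ is a typo.
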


\section{Numerical simulations and conclusions}

\noindent

In this section, we perform some numerical simulations to illustrate the
results showed in Theorem \ref{HopfBifurcation}. We choose the parameter
values in model (\ref{system}) based on the conditions of Hopf bifurcation
(see Figure 1). By calculation, we can easily yield that the first
bifurcation critical value is $\tau_{0}=23.2282 $. Figure (a) and Figure (b)
describe the evolution of $S(t)$ and $i(t,a)$. The solution curves of system
(\ref{system}) performs a sustained periodic oscillation at $\tau=24>\tau_{0}
$. As is shown in Figure (c), when the number of susceptible individuals
tends to 0, the number of infectious individuals reaches its maximum and
vice versa. In addition, the phase trajectories of $S(t)$ and $i(t,\cdot)$
consistently approaches the stable limit cycles. Figure (d) demonstrates the
change of $i(t,a)$ as time and infection-age vary at $\tau=24>\tau_{0}$.

The bifurcation parameter $\tau $ has essentially a huge impact on the
dynamical behavior of system (\ref{system}). Biologically speaking, for
smaller $\tau $, the stability of the unique positive equilibrium of system (%
\ref{system}) is not affected. However, when bifurcation parameter $\tau $
crosses the bifurcation critical value $\tau _{0}$, the sustained periodic
oscillation phenomenon appears around the positive equilibrium. In
conclusion, age structure induces the appearance of periodic solutions.
\begin{figure}[htbp]
\setlength{\belowcaptionskip}{1pt}
 \centering
\begin{minipage}[c]{0.4\textwidth}
    \centering
    \subfigure []
{\includegraphics[width=2.5in]{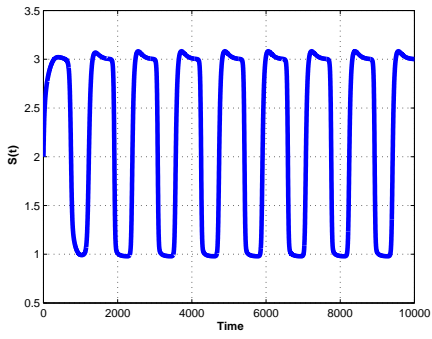}} \hfill
  \end{minipage}
\begin{minipage}[c]{0.4\textwidth}
    \centering
    \subfigure []
{\includegraphics[width=2.5in]{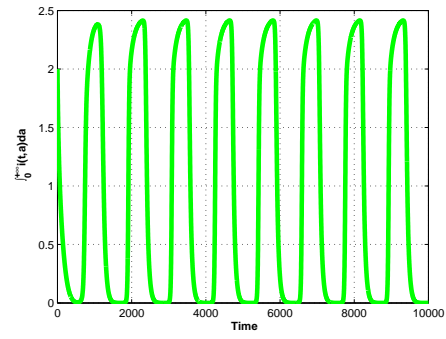}}
  \end{minipage}
\begin{minipage}[c]{0.4\textwidth}
    \centering
    \subfigure []
{\includegraphics[width=2.5in]{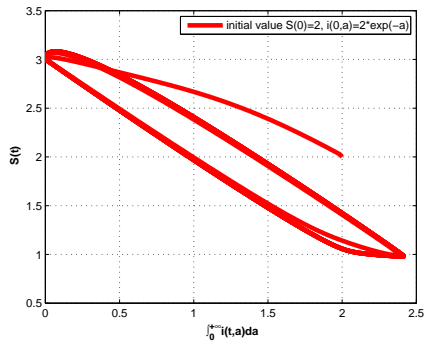}} \hfill
  \end{minipage}
\begin{minipage}[c]{0.4\textwidth}
    \centering
    \subfigure []
{\includegraphics[width=2.5in]{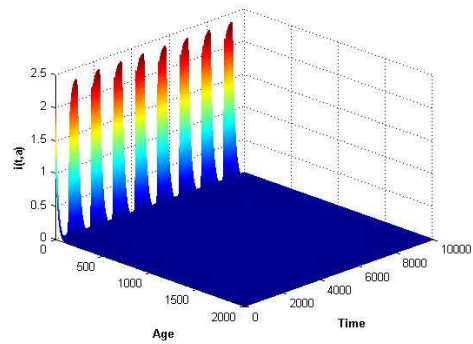}} \hfill
  \end{minipage}
\renewcommand{\figurename}{\footnotesize{Figure}}
\caption{\footnotesize{$A=0.6, \mu=0.2, \eta=0.81, \tau=24, \beta^{*}=e^{\tau}, A-\mu=0.4>0, E-C=0.0051>0, BD-2E=0.1551>0, B^2-2C=0.3065>0, BCD-E(B^2-2C)=0.0034>0$.}}
\end{figure}


\begin{thebibliography}{99}

\vspace{-0.3cm}
\bibitem{LiuLevinIwasa-JMB-1986}
W. M. Liu, S. A. Levin and Y. Iwasa, Influence of nonlinear incidence rates upon the behavior of SIRS epidemiological models, {\em J. Math. Biol.}  {\bf 23} (1986)  187-204.

\vspace{-0.3cm}
\bibitem{LiJunZhaoYulinZhuHuaiping-JMAA-2015}
J. Li, Y. Zhao and H. Zhu, Bifurcation of an SIS model with nonlinear contact rate, {\em J. Math. Anal. Appl.}  {\bf 432} (2015)  1119-1138.


\vspace{-0.3cm}
\bibitem{MimmoIannelli-1995}
M. Iannelli, {\em Mathematical theory of age-structured population dynamics}, Giardini Editori E Stampatori, Pisa, 1995.

\vspace{-0.3cm}
\bibitem{WangLangZou-NARWA-2017}
J. Wang, J. Lang and X. Zou, Analysis of an age structured HIV infection model with virus-to-cell infection and cell-to-cell transmission, {\em Nonlinear Anal. Real World Appl.}  {\bf 34} (2017)  75-96.

\vspace{-0.3cm}
\bibitem{XuZhang-DCDS-B-2016}
X. Xu and S. Zhang, A mathematical model for hepatitis B with infection-age structure, {\em Discrete Contin. Dyn. Syst. Ser. B}  {\bf 21} (2016)  1329-1346.

\vspace{-0.3cm}
\bibitem{YangLiZhang-IJB-2016}
J. Yang, X. Li and F. Zhang, Global dynamics of a heroin epidemic model with age structure and nonlinear incidence, {\em Int. J. Biomath.}  {\bf 09} (2016)  1650033.

\vspace{-0.3cm}
\bibitem{YangRuanXiao-MBE-2015}
Y. Yang, S. Ruan and D. Xiao, Global stability of an age-structured virus dynamics model with Beddington-DeAngelis infection function, {\em Math. Biosci. Eng.}  {\bf 12} (2015)  859-877.

\vspace{-0.3cm}
\bibitem{LiuLi-JNS-2015}
Z. Liu and N. Li, Stability and bifurcation in a predator–prey model with age structure and delays, {\em J. Nonlinear Sci.}  {\bf 25} (2015)  937-957.

\vspace{-0.3cm}
\bibitem{TangLiu-AMM-2016}
H. Tang and Z. Liu, Hopf bifurcation for a predator–prey model with age structure, {\em Appl. Math. Model.}  {\bf 40} (2016)  726-737.

\vspace{-0.3cm}
\bibitem{WangLiu-JMAA-2012}
Z. Wang and Z. Liu, Hopf bifurcation of an age-structured compartmental pest-pathogen model, {\em J. Math. Anal. Appl.}  {\bf 385} (2012)  1134-1150.

\vspace{-0.3cm}
\bibitem{PierreMagalShiguiRuan-MAMS-2009}
P. Magal and S. Ruan, Center manifolds for semilinear equations with non-dense domain and applications to Hopf bifurcation in age structured models, {\em Mem. Amer. Math. Soc.}  {\bf 202} (2009).

\vspace{-0.3cm}
\bibitem{LiuMagalRuan-DCDS-B-2016}
Z. Liu, P. Magal and S. Ruan, Oscillations in age-structured models of consumer-resource mutualisms, {\em Discrete Contin. Dyn. Syst. Ser. B}  {\bf 21} (2015)  537-555.

\vspace{-0.3cm}
\bibitem{LiuMagalRuan-ZAMP-2011}
Z. Liu, P. Magal and S. Ruan, Hopf bifurcation for non-densely defined Cauchy problems, {\em Z. Angew. Math. Phys.}  {\bf 62} (2011)  191-222.

\vspace{-0.3cm}
\bibitem{LiuTangMagal-DCDS-B-2015}
Z. Liu, H. Tang and P. Magal, Hopf bifurcation for a spatially and age structured population dynamics model, {\em Discrete Contin. Dyn. Syst. Ser. B}  {\bf 20} (2015)  1735-1757.

\vspace{-0.3cm}
\bibitem{FuLiuMagal-2015}
X. Fu, Z. Liu and P. Magal, Hopf bifurcation in an age-structured population model with two delays, {\em Commun. Pure Appl. Anal.}  {\bf 14} (2015)  657–676.

\vspace{-0.3cm}
\bibitem{GuckenheimerJHolmesP-1983}
J. Guckenheimer and P. Holmes, {\em Nonlinear oscillations, dynamical systems, and bifurcations of vector fields}, Springer-Verlag, New York, 1983.

\vspace{-0.3cm}
\bibitem{ChowLiWang-1994}
S.-N. Chow, C. Z. Li and D. Wang, {\em Normal forms and bifurcation of planar vector fields}, Cambridge University Press, Cambridge, 1994.

\vspace{-0.3cm}
\bibitem{KokubuHiroshi-JJAM-1984}
H. Kokubu, Normal forms for parametrized vector fields and its application to bifurcations of some reaction diffusion equations, {\em Japan J. Appl. Math.}  {\bf 1} (1984)  273-297.

\vspace{-0.3cm}
\bibitem{EckmannEpsteinWayne-AIHPPT-1993}
J.-P. Eckmann, H. Epstein and C. E. Wayne, Normal forms for parabolic partial differential equations, {\em Ann. Inst. H. Poincar$\acute{e}$. Phys. Th$\acute{e}$or.}  {\bf 58} (1993)  287-308.

\vspace{-0.3cm}
\bibitem{FariaTeresaMagalhãesLuisT-JDE-1995Hopf}
T. Faria and L. T. Magalhães, Normal forms for retarded functional differential equations with parameters and applications to Hopf bifurcation, {\em J. Differential Equations}  {\bf 122} (1995)  181-200.

\vspace{-0.3cm}
\bibitem{FariaTeresaMagalhãesLuisT-JDE-1995BT}
T. Faria and L. T. Magalhães, Normal forms for retarded functional differential equations and applications to Bogdanov-Takens singularity, {\em J. Differential Equations}  {\bf 122} (1995)  201-224.

\vspace{-0.3cm}
\bibitem{LiuZhihuaMagalPierreRuanShigui-JDE-2014}
Z. Liu, P. Magal and S. Ruan, Normal forms for semilinear equations with non-dense domain with applications to age structured models, {\em J. Differential Equations}  {\bf 257} (2014)  921-1011.

\vspace{-0.3cm}
\bibitem{ChuLiuMagalRuan-JDDE-2016}
J. Chu, Z. Liu, P. Magal and S. Ruan, Normal forms for an age structured model, {\em J. Dynam. Differential Equations}  {\bf 28} (2015)  733-761.

\vspace{-0.3cm}
\bibitem{MagalMcCluskeyWebb-AA-2010}
P. Magal, C. C. McCluskey and G. F. Webb, Lyapunov functional and global asymptotic stability for an infection-age model, {\em Appl. Anal.}  {\bf 89} (2010)  1109-1140.


\vspace{-0.3cm}
\bibitem{MagalRuan-ADE-2009}
P. Magal and S. Ruan, On semilinear Cauchy problems with non-dense domain, {\em Adv. Differential Equations}  {\bf 14} (2009)  1041-1084.


\vspace{-0.3cm}
\bibitem{Magal-EJDE-2001}
P. Magal, Compact attractors for time-periodic age-structured population models, {\em Electron. J. Differential Equations}  {\bf 2001} (2001)  1-35.

\vspace{-0.3cm}
\bibitem{DucrotLiuMagal-JMAA-2008}
A. Ducrot, Z. Liu and P. Magal, Essential growth rate for bounded linear perturbation of non-densely defined Cauchy problems, {\em J. Math. Anal. Appl.}  {\bf 341} (2008)  501-518.


\vspace{-0.3cm}
\bibitem{ShuiNeeChowJackKHale-1982}
S.-N. Chow and J. K. Hale, {\em Methods of bifurcation theory}, Springer-Verlag, New York, 1982.

\end{thebibliography}
\end{document}